\documentclass[11pt]{amsart}

\usepackage{amsmath,amssymb,amsthm,mathtools,yhmath}
\usepackage{enumitem}
\usepackage{microtype}
\usepackage[colorlinks=true,linkcolor=blue,citecolor=blue,urlcolor=blue]{hyperref}
\usepackage[margin=1.15in]{geometry}

\numberwithin{equation}{section}
\newtheorem{theorem}{Theorem}[section]
\newtheorem{lemma}[theorem]{Lemma}

\newtheorem{proposition}[theorem]{Proposition}
\theoremstyle{definition} 

\newtheoremstyle{remarkstyle} 
   {}   
   {}   
  {\normalfont}  
  {}      
  {\itshape}  
  {.}     
  { }     
  {}      
\theoremstyle{remarkstyle}
\newtheorem*{remark}{Remark} 

\allowdisplaybreaks[4]

\newcommand{\R}{\mathbb{R}}
\newcommand{\C}{\mathbb{C}}

\newcommand{\Hh}{\mathbb{H}}
\newcommand{\diam}{\operatorname{diam}}

\newcommand{\id}{\operatorname{id}}
\newcommand{\Mod}{\operatorname{Mod}}
\newcommand{\esssup}{\operatorname*{ess\,sup}}

\title[Asymptotic symmetry of quasilines]
{Weak Asymptotic Symmetry of Quasisymmetric Embeddings on Quasilines}

\author{Katsuhiko Matsuzaki}
\address{Department of Mathematics, School of Education, Waseda University, Tokyo 169-8050, Japan}
\email{matsuzak@waseda.jp}
\author{Fei Tao}
\address{Beijing International Center for Mathematical Research, Peking University, Beijing 100871, P. R. China}
\email{ferrytau@pku.edu.cn}
\date{}

\thanks{The first author is partially supported by Japan Society for the Promotion of Science (KAKENHI 23K25775 and 23K17656); the second author is partially supported by National Key R \& D Program of China (2025YFA1017500)}

\subjclass[2020]{Primary 30C62, 30C65, 30C35; Secondary 30F60, 26A15, 51F99}
\keywords{Quasisymmetric embedding, Asymptotic symmetry, Weak asymptotic symmetry, Asymptotically conformal quasicircle}

\begin{document}

\begin{abstract}
We investigate the relationship between weak asymptotic symmetry ($\mathrm{WAS}$) and asymptotic symmetry ($\mathrm{AS}$) for quasisymmetric maps associated with planar quasilines. To this end, we introduce a formally weaker condition, called equidistant weak asymptotic symmetry ($\mathrm{EWAS}$), and prove that, for quasisymmetric embeddings of $\mathbb{R}$ into $\mathbb{C}$, the three conditions $\mathrm{AS}$, $\mathrm{WAS}$, and $\mathrm{EWAS}$ are equivalent.

We then establish that $\mathrm{WAS}$ implies $\mathrm{AS}$ for quasisymmetric homeomorphisms from an arbitrary quasiline onto $\mathbb{R}$. More generally, if $h\colon\Gamma_1\to\Gamma_2$ is a quasisymmetric homeomorphism between quasilines and $\Gamma_1$ is asymptotically conformal, then $\mathrm{WAS}$ implies $\mathrm{AS}$. A formally dual statement holds when $\Gamma_2$ is asymptotically conformal, provided that $h$ is uniformly continuous. In the compact setting of bounded quasicircles, these results yield an answer to the $\mathrm{WAS}$--$\mathrm{AS}$ problem posed by Brania and Yang.
\end{abstract}

\maketitle

\section{Introduction}
Let $X,Y$ be metric spaces in general with distance $d(x_1,x_2)$ denoted by $|x_1-x_2|$. 
An embedding $h\colon X\to Y$ is called \emph{quasisymmetric} if there is a homeomorphism $\eta\colon[0,\infty)\to[0,\infty)$, called a \emph{distortion function}, such that
\[
 \frac{|a-x|}{|b-x|}\leq t\quad\implies\quad\frac{|h(a)-h(x)|}{|h(b)-h(x)|}\le\eta(t)
\]
for any three distinct points $a,b,x\in X$. 
The concept of quasisymmetry was first introduced in a weaker form
by Beurling and Ahlfors \cite{BA} for self-mappings of the real line $\mathbb{R}$,
and was later extended to general metric spaces by Tukia and V\"ais\"al\"a \cite{TukiaVaisala}. 
For self-mappings of $\mathbb C$, the equivalence between quasiconformality and quasisymmetry follows from the modulus characterization of quasiconformality; see, for example, \cite{LV71, AIM}. 

A Jordan curve in $\mathbb C$ is said to be a \emph{quasicircle} if 
it admits a quasisymmetric parametrization by the unit circle $\mathbb S$.
A \emph{quasiline} means a generalized quasicircle through $\infty$ in the Riemann sphere that arises as the image of a quasisymmetric embedding $f\colon\mathbb R\to \mathbb{C}$. 
This always extends to a quasiconformal self-homeomorphism of $\mathbb C$ by Tukia \cite{TukiaExtension}.
See also \cite[Proposition~5.10 and Theorem~5.11]{Pom}.

For any two points $a,b$ on a quasiline $\Gamma \subset \mathbb C$, let $\Gamma[a,b]$ denote the bounded subarc joining them. 
Quasilines can also be characterized geometrically by the \emph{bounded-turning} condition \cite{Ahlfors}: there is $C\ge1$ such that
\[
 \diam\Gamma[a,b]\leq C|a-b|
\]
for any $a,b\in\Gamma$. 
The constant $C \geq 1$ can be estimated by the maximal dilatation $K \geq 1$ of a possible quasiconformal extension of $f$ to $\C$, and vice versa.

The boundary behavior of quasiconformal mappings leads to an asymptotic version of conformality.  
A quasiconformal mapping is \emph{asymptotically conformal} if its complex dilatation tends to zero as approaching to the boundary.   
The corresponding geometric formulations for their images of the boundary were investigated in \cite{Car67,GS,P}. 
A quasiline $\Gamma$ is called \emph{asymptotically conformal} if
\begin{equation} \label{def:symmetric-quasiline}
\omega_{\Gamma}(r)\coloneqq\sup_{a,b\in\Gamma,\ 0<|a-b|\leq r}\max_{w\in\Gamma[a,b]}\left(\frac{|a-w|+|w-b|}{|a-b|}-1\right)\to0\qquad(r\to0).
\end{equation}
This serves as the unbounded analogue of the asymptotically conformal condition for quasicircles introduced by Pommerenke \cite{P}, and is also referred to as the symmetric quasicircle condition by Brania and Yang \cite{BraniaYang}.

To characterize asymptotic conformality, the notion of asymptotically symmetric embeddings
was introduced by
\cite[Definition~2.1]{BraniaYang}. An embedding $h\colon X\to Y$ is called \emph{asymptotically symmetric}, abbreviated as $\mathrm{AS}$, if for every $\varepsilon>0$ and every $t>0$ there exists $\delta>0$ such that
\begin{equation}\label{eq:AS}
 \frac{|a-x|}{|b-x|}\leq t\quad\implies\quad\frac{|h(a)-h(x)|}{|h(b)-h(x)|}\le(1+\varepsilon)t
\end{equation}
whenever $a,b,x\in X$ are distinct, and
$\diam\{a,b,x\}\leq\delta$. 
Condition \eqref{eq:AS} is an infinitesimal form of quasisymmetry in which the distortion function approaches the identity uniformly at small scales. 
If \eqref{eq:AS} is required only for $t=1$, the map $h$ is called \emph{weakly asymptotically symmetric}, abbreviated as $\mathrm{WAS}$.

Brania and Yang \cite[Section~4]{BraniaYang} proved that $\mathrm{WAS}$ and $\mathrm{AS}$ are equivalent for self-homeomor\-phisms of $\mathbb S$.
Their proof relies on the argument involving the conformal welding of $\mathrm{WAS}$ and asymptotically conformal extensions of the conformal mappings given by Gardiner and Sullivan \cite[Theorem 6.3]{GS}.
Then, they posed the question of whether this equivalence extends to embeddings of Jordan curves in $\mathbb C$, or under what geometric conditions such an implication holds. 

We also consider a condition weaker than $\mathrm{WAS}$. In fact, the $\mathrm{WAS}$ condition in \cite{Car67, GS} for quasisymmetric self-homeomorphisms 
of $\mathbb S$ and $\mathbb R$ was in this form.
In the equidistant case, the condition $\mathrm{WAS}$ provides a two-sided control of the distortion by interchanging $a$ and $b$. 
This leads to the following quantitative formulation:
For $r>0$, the \emph{symmetric distortion} is defined by
\begin{equation*}
 H_h^\mathrm{eq}(r)=\sup\left\{\max\left(\frac{|h(a)-h(x)|}{|h(b)-h(x)|},\frac{|h(b)-h(x)|}{|h(a)-h(x)|}\right):
 \begin{array}{c}
 a,b,x\in X,\ |a-x|=|b-x|,\\
 0<\diam\{a,b,x\}\leq r
 \end{array}
 \right\}.
\end{equation*}
We say that $h$ is \emph{equidistant weakly asymptotically symmetric}, abbreviated as $\mathrm{EWAS}$, if 
\[H_h^\mathrm{eq}(r) \to  1\quad \text{as}\quad r \to 0.\]
In particular, for an embedding $f\colon\R\to\C$,  the $\mathrm{EWAS}$ condition simplifies to
\begin{equation*}
 \lim_{r\to 0}\sup_{x\in\R,\, 0<s\leq r}\frac{|f(x+s)-f(x)|}{|f(x)-f(x-s)|}=1.
\end{equation*}

Directly from the definitions, we have the implications
\[
 \mathrm{AS}\implies\mathrm{WAS}\implies\mathrm{EWAS}.
\]
We note that, on a noncompact space, $\mathrm{AS}$ need not imply global quasisymmetry, which is therefore retained as a separate hypothesis.

Our first main result demonstrates that for quasisymmetric embeddings of $\mathbb{R}$, this weaker condition is already sufficient to guarantee asymptotic symmetry.

\begin{theorem}\label{thm:main-intro}
Let $f\colon\R\to\C$ be a quasisymmetric embedding.  
The following conditions are equivalent.
\begin{enumerate}[label=\textup{(\roman*)}]
\item $f$ is asymptotically symmetric $\mathrm{(AS)}$.
\item $f$ is weakly asymptotically symmetric $\mathrm{(WAS)}$.
\item The distortion of $f$ on symmetric triples tends uniformly to $1$ as the scale tends to $0$ $\mathrm{(EWAS)}$.
\end{enumerate}
\end{theorem}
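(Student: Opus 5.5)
The implications $\mathrm{(i)}\Rightarrow\mathrm{(ii)}\Rightarrow\mathrm{(iii)}$ hold directly from the definitions, as noted before the statement. So the content of the theorem is $\mathrm{(iii)}\Rightarrow\mathrm{(i)}$. I plan a blow-up (normal family) argument by contradiction.

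Suppose $f$ is $\eta$-quasisymmetric and $\mathrm{EWAS}$ but not $\mathrm{AS}$. Then there are $\varepsilon>0$, $t>0$ and triples $a_n,b_n,x_n\in\R$ with $\diam\{a_n,b_n,x_n\}\to 0$ such that $|a_n-x_n|\le t|b_n-x_n|$ but
\[
 \frac{|f(a_n)-f(x_n)|}{|f(b_n)-f(x_n)|}>(1+\varepsilon)t .
\]
Set $s_n=|b_n-x_n|>0$ and define the rescaled maps
\[
 g_n(u)=\frac{f(x_n+s_nu)-f(x_n)}{f(x_n+s_n)-f(x_n)},\qquad u\in\R .
\]
Each $g_n$ is an $\eta$-quasisymmetric embedding $\R\to\C$ with $g_n(0)=0$ and $g_n(1)=1$. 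In these coordinates the triple becomes $0$, $\pm1$ and a point $\alpha_n$ with $|\alpha_n|\le t$. By the standard compactness of normalized $\eta$-quasisymmetric families (Tukia--V\"ais\"al\"a: equicontinuity on compacta, and the limit is again $\eta$-quasisymmetric, in particular injective), after passing to a subsequence $g_n\to g$ locally uniformly and $\alpha_n\to\alpha$. The limit $g$ is an $\eta$-quasisymmetric embedding, and the strict inequality passes to the limit as
\[
 \frac{|g(\alpha)|}{|g(\pm1)|}\ge(1+\varepsilon)t\ge(1+\varepsilon)|\alpha| .
\]
This is legitimate because $g(\pm1)\neq 0$ by injectivity. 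On the other hand, since the scales $s_n\to0$ and $\mathrm{EWAS}$ is uniform in the base point, every fixed symmetric triple in $u$-coordinates eventually lies at scale $\le r$ for any prescribed $r$. Hence
\[
 |g(u+v)-g(u)|=|g(u)-g(u-v)|\qquad\text{for all }u\in\R,\ v>0. \tag{$*$}
\]

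The proof is then reduced to a rigidity lemma: \emph{an $\eta$-quasisymmetric embedding $g\colon\R\to\C$ satisfying $(*)$ is a real-affine similarity $g(u)=cu+d$ with $c\in\C\setminus\{0\}$.} Given this lemma, $|g(\alpha)|/|g(\pm1)|=|\alpha|$, which contradicts the displayed inequality. This yields $\mathrm{AS}$.

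The rigidity lemma is the step I expect to be the main obstacle. From $(*)$, for each fixed $v$ the points $g(u+kv)$, $k\in\mathbb Z$, have equal consecutive distances. I would first try to show that $\phi(u,v)=|g(u+v)-g(u)|$ is independent of $u$. By $(*)$, $\phi(\cdot,v)$ is $v$-periodic. Applying $(*)$ with radii $v/m$ and centers on the grid $u+(v/m)\mathbb Z$, and then using continuity, should upgrade this to periods $v/m$ for all $m$, hence all periods, hence constancy in $u$.

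Next I would exploit Euclidean planar geometry. For every center $g(u)$ and every radius $v$, the two points $g(u\pm v)$ lie on a common circle about $g(u)$. Combined with translation-invariance of all chord lengths, this makes $u\mapsto g(u)$ a one-parameter family of points with an isometry-invariant distance pattern. I would then argue that $g(u)\mapsto g(u+\tau)$ extends to a Euclidean isometry for each $\tau$. The orbits of a continuous one-parameter isometry group of $\C$ are lines, circles or points. Circles and points are excluded because $g$ is an injective embedding of $\R$ with $g(u)\to\infty$ as $|u|\to\infty$, which follows from quasisymmetry. Hence $g$ traces a line at constant speed, i.e.\ $g$ is affine.

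If the isometry-extension step proves delicate, a fallback is to show directly that each $g(u)$ is the midpoint of $g(u-v)$ and $g(u+v)$. For this I would use the bounded-turning property of $g(\R)$ together with equal chord lengths at dyadic scales to rule out any turning: a nonzero turning angle would accumulate along the orbit and force the curve to close up, contradicting injectivity.
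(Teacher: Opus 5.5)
Your blow-up argument is correct and is exactly the paper's alternative proof at the end of Section~\ref{sec:tukia_extension_and_proof_of_theorem_ref_thm_main_intro}: normalize at scale $|b_n-x_n|$, extract a limit $g$, transfer $\mathrm{EWAS}$ to $(*)$, force $g=\mathrm{id}$, and reach a contradiction. The paper's primary proof instead goes through the asymptotically conformal Tukia extension and Proposition~\ref{prop:AC-boundary-AS}.

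The genuine gap is the rigidity lemma. The paper does not prove it; it quotes it from McKemie--Vaaler (Theorem~\ref{thm:MV}), while you only sketch a proof. Your second step is fine \emph{given} that $\phi(u,v)=|g(u+v)-g(u)|$ is independent of $u$. In that case the shifts are distance-preserving on $g(\R)$, they extend to Euclidean isometries forming a continuous one-parameter group whose orbits are lines or circles, and the collinear case reduces to Jensen's equation. But the first step, $u$-independence of $\phi$, is not established, and the mechanism you propose cannot give it.

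Indeed, $(*)$ only equates the length of a pair $\{y,y+s\}$ with that of its reflection $\{y,y-s\}$ through an endpoint. The orbit of $\{y,y+v\}$ under these moves is $\{\{y+kv,y+(k+1)v\}:k\in\mathbb Z\}$. So grid applications of $(*)$ give $v$-periodicity of $\phi(\cdot,v)$ and nothing relating $\phi(y,v)$ to $\phi(y+v/m,v)$.

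Planar geometry on a single grid does not rescue this. Take $p_{2k}=r_0e^{2ik\alpha}$ and $p_{2k+1}=r_1e^{i(2k+1)\alpha}$ with $r_0\ne r_1$. For each $k$, the reflection $z\mapsto e^{2ik\alpha}\bar z$ in the line $\R e^{ik\alpha}$ fixes $p_k$ and maps $p_{k+j}$ to $p_{k-j}$, since indices of equal parity carry the same radius. So every grid instance of $(*)$ holds. Yet
\[
|p_2-p_0|=r_0|e^{2i\alpha}-1|\ne r_1|e^{2i\alpha}-1|=|p_3-p_1|,
\]
so $\phi(\cdot,v)$ need not be $v/2$-periodic on the grid of spacing $v/2$. ``Using continuity'' would therefore have to do all the work by coupling incommensurable scales, and you give no argument for that. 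That coupling is the real content of the McKemie--Vaaler theorem.

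Your fallback fails for a similar reason. The zigzag $p_k=k+i(-1)^kh$ has all chord lengths $|p_{k+j}-p_k|$ depending only on $|j|$, and it is unbounded and of bounded turning. But its turning angles alternate in sign and never accumulate. Ruling out turning would need an orientation argument, i.e.\ showing consecutive steps are related by a rotation rather than a glide reflection, which you do not supply.

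The fix is to invoke Theorem~\ref{thm:MV}; the circular alternative is excluded because $g$ is unbounded. Your proof is then complete and coincides with the paper's alternative proof.
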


Theorem~\ref{thm:main-intro} can be established using a quasiconformal extension theorem of Tukia \cite{TukiaExtension} based on the Beurling--Ahlfors formula.
Specifically, in Theorem~\ref{thm:Esharp-AC}, we show the following:
Let $f\colon\R\to\C$ be a quasisymmetric embedding satisfying condition \textup{(iii)} of Theorem~\ref{thm:main-intro}.  
The Tukia extension $\mathcal Ef\colon\C\to\C$ defined in Section~\ref{sec:tukia_extension_and_proof_of_theorem_ref_thm_main_intro} is quasiconformal and its complex dilatation $\mu_{\mathcal Ef}$ satisfies the asymptotically conformal condition
\[
 \esssup_{0<|\operatorname{Im}z|<r}
 |\mu_{\mathcal Ef}(z)| \to 0 \quad \text{as}\quad r \to 0.
\]

We adapt two complementary approaches from the real-line argument.  
One combines the asymptotically conformal extension as in Theorem \ref{thm:Esharp-AC} with the compactness of normalized quasiconformal maps in the proof of Proposition \ref{prop:AC-boundary-AS} to show that the boundary extension of an asymptotically conformal map is $\mathrm{AS}$.  
We also give in Section~\ref{sec:a_proof_by_modulus_of_curve_families} a modulus proof of Proposition \ref{prop:AC-boundary-AS} modeled on \cite[Theorem~3.1]{BraniaYang}: the relevant curve family is divided into curves staying in an almost conformal neighborhood and curves escaping that neighborhood, whose modulus is small by an annulus estimate.

The other works directly on the boundary without using the quasiconformal extension and the proof is shorter. 
This is based on the compactness argument of normalized mappings for symmetric triples of small scale. 
Then, the limiting map preserves all symmetric triples, and the rigidity theorem of McKemie and Vaaler \cite{McKemieVaaler} forces the limit to be affine. 
In fact, this method is used for the proofs of the other theorems in this paper.

The next result shows that the same conclusion holds for quasisymmetric homeomorphisms from a quasiline to $\mathbb R$.
In this case, we apply the same rigidity theorem to the limit of the normalized mappings applied to the inverse maps. The key observation is an intermediate-value argument along the inverse images of intervals.

\begin{theorem}\label{thm:onto-line-intro}
Let $\Gamma$ be a quasiline, and let $h\colon\Gamma\to\R$ be a quasisymmetric homeomorphism.  
If $h$ is $\mathrm{EWAS}$, then $h$ is $\mathrm{AS}$. Consequently, the conditions $\mathrm{EWAS}$, $\mathrm{WAS}$, and $\mathrm{AS}$ are equivalent.
\end{theorem}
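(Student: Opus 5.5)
We argue by contradiction, using compactness and the McKemie--Vaaler rigidity theorem, as announced in the introduction. Since $\mathrm{AS}\Rightarrow\mathrm{WAS}\Rightarrow\mathrm{EWAS}$ holds by definition, only $\mathrm{EWAS}\Rightarrow\mathrm{AS}$ needs proof.

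Put $g=h^{-1}\colon\R\to\Gamma$. This is a quasisymmetric embedding of $\R$ into $\C$, since inverses of quasisymmetric maps are quasisymmetric. Suppose $h$ is not $\mathrm{AS}$. Then there are $\varepsilon>0$, $t>0$ and triples $a_n,b_n,x_n\in\Gamma$ such that
\[
\diam\{a_n,b_n,x_n\}\to0,\qquad \frac{|a_n-x_n|}{|b_n-x_n|}\le t,\qquad \frac{|h(a_n)-h(x_n)|}{|h(b_n)-h(x_n)|}>(1+\varepsilon)t .
\]
Set $u_n=h(x_n)$ and $s_n=|h(b_n)-h(x_n)|$. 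Uniform continuity of $h$ at small scales follows from quasisymmetry together with the bounded-turning property of $\Gamma$, so $s_n\to0$ along a subsequence.

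Next we normalize on the line side. Define
\[
G_n(y)=\frac{g(u_n+s_ny)-g(u_n)}{g(u_n+s_n)-g(u_n)},\qquad y\in\R .
\]
These are $\eta$-quasisymmetric embeddings $\R\to\C$ with the same $\eta$, and they satisfy $G_n(0)=0$ and $G_n(1)=1$. By the standard compactness of normalized quasisymmetric families, a subsequence converges locally uniformly to a quasisymmetric embedding $G\colon\R\to\C$.

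We now transfer the $\mathrm{EWAS}$ hypothesis to $G$; this step carries the main difficulty. $\mathrm{EWAS}$ for $h$ controls only triples in $\Gamma$ with $|a-x|=|b-x|$, i.e.\ triples equidistant in the curve metric. We want $G$ to map symmetric triples $y-s,y,y+s$ to triples with $|G(y+s)-G(y)|=|G(y)-G(y-s)|$. This is the reverse direction, so an intermediate-value argument along arcs is required. Fix $y$ and $s$ and set $w=G(y)$. The function $\phi(\tau)=|G_n(y+\tau)-G_n(y)|$ is continuous, vanishes at $\tau=0$, and tends to $\infty$. Taking $\tau=s$, the intermediate value theorem on the other side produces $\sigma_n$ with
\[
|G_n(y-\sigma_n)-G_n(y)|=|G_n(y+s)-G_n(y)| .
\]
Both points lie on $\Gamma$ (rescaled) and are equidistant from $G_n(y)$. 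Undoing the normalization and applying $\mathrm{EWAS}$ to this triple in $\Gamma$ gives $\sigma_n/s\to1$, because the images under $h$ are $y+s$ and $y-\sigma_n$ relative to $y$. Letting $n\to\infty$ yields $|G(y+s)-G(y)|=|G(y)-G(y-s)|$. Hence $G$ preserves all symmetric triples.

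Finally, the rigidity theorem of McKemie and Vaaler applies to such a $G$. A quasisymmetric embedding of $\R$ into $\C$ preserving all equidistant triples is affine, and with our normalization $G(y)=y$. On the other hand, the bad triples pass to the limit. With $\alpha_n=(h(a_n)-u_n)/s_n$, we have $|\alpha_n|\ge(1+\varepsilon)t$, and $|\alpha_n|$ is bounded above by quasisymmetry, so $\alpha_n\to\alpha$ with $|\alpha|\ge(1+\varepsilon)t$. Likewise $\beta_n=(h(b_n)-u_n)/s_n=\pm1$. The condition $|a_n-x_n|\le t|b_n-x_n|$ becomes $|G_n(\alpha_n)|\le t\,|G_n(\beta_n)|$, and in the limit $|\alpha|\le t$. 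This contradicts $|\alpha|\ge(1+\varepsilon)t$ and proves that $h$ is $\mathrm{AS}$.
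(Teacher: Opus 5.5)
Your strategy is the paper's own. You normalize the inverse map at the scale of the bad triple, transfer $\mathrm{EWAS}$ to the limit by an intermediate-value argument, obtain the identity from McKemie--Vaaler, and contradict the normalized bad triple. The differences are minor. You use only quasisymmetric compactness of $g=h^{-1}$ (Lemma~\ref{lem:normal-boundary}) instead of a quasiconformal extension and Lemma~\ref{lem:normal-qc}, and you apply the intermediate value theorem directly rather than by contradiction; both choices are harmless and slightly more elementary.

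There is, however, one false step. Uniform continuity of $h$ does \emph{not} follow from quasisymmetry and bounded turning on a noncompact quasiline, so you cannot conclude $s_n\to0$. For example, take $\Gamma=\R$ and $h(x)=x|x|$. This map is quasisymmetric, being the restriction of the quasiconformal radial stretch $z\mapsto z|z|$, yet $h(n+1/n)-h(n)=2+1/n^2\not\to0$. The paper stresses exactly this point before Theorem~\ref{thm:symmetric-target-intro}, where uniform continuity must be assumed.

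As written, this false claim is the only support for the hypothesis you need when you apply $\mathrm{EWAS}$: that the transferred equidistant triple has small diameter in $\Gamma$. Even $s_n\to0$ would not give this, since $g$ need not be uniformly continuous either.

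The fix is to use the domain-side scale. Let $\lambda_n=|g(u_n+s_n)-g(u_n)|$, and let $\eta'$ be the distortion function of $g$. Then $\lambda_n=|b_n-x_n|$ if $h(b_n)=u_n+s_n$. Otherwise quasisymmetry of $g$ on the symmetric triple $u_n-s_n,u_n,u_n+s_n$ gives $\lambda_n\le\eta'(1)|b_n-x_n|$. In either case $\lambda_n\to0$, because $\diam\{a_n,b_n,x_n\}\to0$.

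Your triple is equidistant about $g(u_n+s_ny)$ with common distance $\lambda_n|G_n(y+s)-G_n(y)|$. So its diameter is at most $2\lambda_n|G_n(y+s)-G_n(y)|$. This is $O(\lambda_n)$, since $G_n\to G$ locally uniformly, and it holds even before you know $\sigma_n$ is bounded. This is exactly how the paper controls the diameter ($O(r_n)$ after applying $\Psi_n^{-1}$). Delete the uniform-continuity sentence and insert this estimate; the rest of your argument is then complete.
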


In a more general setting, the same conclusion holds for a quasisymmetric homeomorphism $h\colon\Gamma_1 \to \Gamma_2$ between quasilines under the following circumstances. 
When $\Gamma_1$ is asymptotically conformal, every infinitesimal normalized limit of $\Gamma_1$ is the real line $\R$; the proof therefore reduces to the argument for Theorem~\ref{thm:main-intro}.

\begin{theorem}\label{thm:symmetric-source-intro}
Let $\Gamma_1$ and $\Gamma_2$ be quasilines, and assume that $\Gamma_1$ is an asymptotically conformal quasiline.  
Let $h\colon\Gamma_1\to\Gamma_2$ be a quasisymmetric homeomorphism. 
If $h$ is $\mathrm{EWAS}$, then $h$ is $\mathrm{AS}$.
Consequently, the conditions $\mathrm{EWAS}$, $\mathrm{WAS}$, and $\mathrm{AS}$ are equivalent.
\end{theorem}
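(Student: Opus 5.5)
We argue by contradiction using a blow-up (compactness) argument combined with the rigidity theorem of McKemie and Vaaler. Suppose $h$ is $\mathrm{EWAS}$ but not $\mathrm{AS}$. Then there exist $\varepsilon>0$, $t>0$, and triples $a_n,b_n,x_n\in\Gamma_1$ with $\diam\{a_n,b_n,x_n\}\to0$, $|a_n-x_n|\le t|b_n-x_n|$, and
\[
\frac{|h(a_n)-h(x_n)|}{|h(b_n)-h(x_n)|}>(1+\varepsilon)t .
\]
Put $r_n=|b_n-x_n|$ and consider the normalized curves $\Gamma_1^n=(\Gamma_1-x_n)/r_n$. The image side is normalized by $\Gamma_2^n=(\Gamma_2-h(x_n))/|h(b_n)-h(x_n)|$, and the normalized maps $h_n\colon\Gamma_1^n\to\Gamma_2^n$ send $0\mapsto0$ and $b_n'\mapsto b_n''$, with $|b_n'|=|b_n''|=1$. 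Each $\Gamma_1^n$ is a $C$-bounded-turning quasiline with a uniform constant, and each $h_n$ is $\eta$-quasisymmetric with the same $\eta$.

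The key geometric input is asymptotic conformality of $\Gamma_1$. Since $\omega_{\Gamma_1}(r)\to0$, for any $R>0$ and any two points of $\Gamma_1^n\cap\{|z|\le R\}$, every point of the arc between them lies almost on the segment joining them, with error $\omega_{\Gamma_1}(2Rr_n)\cdot O(R)\to0$. I would use this, together with bounded turning (which keeps the arcs inside a bounded ball), to show the following. After passing to a subsequence and applying rotations, $\Gamma_1^n$ converges to $\R$ in the Hausdorff sense on compacta. Moreover, there are parametrizations $\gamma_n\colon\R\to\Gamma_1^n$ with $\gamma_n(0)=0$ such that $\gamma_n\to\id$ locally uniformly and $|\gamma_n(s)-\gamma_n(s')|/|s-s'|\to1$ uniformly on compacta away from the diagonal. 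One choice is to let $\gamma_n(s)$ be the point of $\Gamma_1^n$ at "signed distance" $s$ from $0$, which is well defined for large $n$ because distance from $0$ is almost monotone along the arc. I expect this step to be the main obstacle: converting $\omega_{\Gamma_1}\to0$ into a near-isometric parametrization of $\Gamma_1^n$ uniformly on $[-R,R]$, with some care needed for monotonicity of $|\gamma_n|$.

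Next set $g_n=h_n\circ\gamma_n\colon\R\to\C$. These maps are uniformly quasisymmetric, since $\gamma_n$ is uniformly quasisymmetric on compacta (being close to an isometry) and quasisymmetry is local on bounded-turning curves. They are also normalized, with $g_n(0)=0$ and $|g_n(\pm1)|\approx1$. Standard compactness of normalized quasisymmetric maps (equicontinuity from $\eta$) gives a subsequence converging locally uniformly to an $\eta$-quasisymmetric embedding $g\colon\R\to\C$.

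The $\mathrm{EWAS}$ hypothesis passes to the limit. For a symmetric triple $x-s,x,x+s$ in $\R$, the points $\gamma_n(x\pm s),\gamma_n(x)$ are nearly equidistant in $\Gamma_1^n$. By continuity along the arc we can perturb one of them to $\tilde\gamma_n$ so that the triple becomes exactly equidistant, and quasisymmetry controls the effect of the perturbation on the images. Because the scale $r_n\to0$, $\mathrm{EWAS}$ of $h$ then gives $|g(x+s)-g(x)|=|g(x)-g(x-s)|$ for all $x$ and $s>0$. The rigidity theorem of McKemie and Vaaler shows that $g$ is affine, $g(s)=\alpha s$ with $|\alpha|=1$ by the normalization.

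Finally, the points $a_n,b_n$ correspond to parameters $\alpha_n,\beta_n$ with $|\beta_n|\to1$ and $\limsup|\alpha_n|\le t$. Passing to limits $\alpha,\beta$ gives
\[
\frac{|h(a_n)-h(x_n)|}{|h(b_n)-h(x_n)|}\to\frac{|\alpha|}{|\beta|}\le t,
\]
which contradicts the lower bound $(1+\varepsilon)t$. The degenerate case $\alpha_n\to0$ is harmless, because it makes the left side tend to $0$. This establishes $\mathrm{AS}$, and the chain of implications $\mathrm{AS}\Rightarrow\mathrm{WAS}\Rightarrow\mathrm{EWAS}$ then gives the equivalence of the three conditions.
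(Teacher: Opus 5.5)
Your proof has the same skeleton as the paper's. You blow up at scale $|b_n-x_n|$ and use asymptotic conformality of $\Gamma_1$ to make the limiting source curve a line. You then correct nearly symmetric triples to exactly symmetric ones along the arc (this is essentially Lemma~\ref{lem:equidistant-approx}), pass $\mathrm{EWAS}$ to the limit, apply Theorem~\ref{thm:MV}, and finish with the three-point contradiction.

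The genuine difference is how you make sense of a limit of maps $h_n$ defined on varying curves. The paper never parametrizes $\Gamma_1^n$. It fixes a quasiconformal extension $H$ of $h$, normalizes $H_n=\Phi_n\circ H\circ\Psi_n^{-1}$, and uses compactness of $K$-quasiconformal maps (Lemma~\ref{lem:normal-qc}). As a result, $h_\infty=H_\infty|_{\R}$ is automatically a quasisymmetric embedding, evaluation at moving points is free, and asymptotic conformality is used only qualitatively through Lemma~\ref{lem:symmetric-blowup}. Your route stays on the curves and needs no extension theorem. The cost is that you must build the parametrizations $\gamma_n$, and your justification there is wrong as written.

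Distance from $0$ need not be monotone along $\Gamma_1^n$ for any $n$. Asymptotic conformality allows a curve to turn slowly but without bound across scales; a spiral point $\tau\mapsto\tau e^{i\log\log(1/|\tau|)}$ is compatible with $\omega_{\Gamma_1}(r)\to0$. If $|x_n-p_0|=r_n$ for such a point $p_0$, then $|z|$ oscillates on $\Gamma_1^n$ arbitrarily close to $|z|=1$, for every $n$. So ``the point at signed distance $s$'' is not unique. The first-hitting choice (the endpoint of the component of $\Gamma_1^n\cap\{|z|\le|s|\}$ containing $0$) is well defined but may jump. Moreover, $|\gamma_n(s)-\gamma_n(s')|/|s-s'|\to1$ away from the diagonal gives no control at small scales, so it does not make $\gamma_n$, hence $g_n$, uniformly quasisymmetric.

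Neither property is actually needed. With the first-hitting choice, your asymptotic conformality estimates give $\gamma_n\to\id$ locally uniformly. They also give convergence of every ratio $|\gamma_n(s_1)-\gamma_n(s_3)|/|\gamma_n(s_2)-\gamma_n(s_3)|$ to $|s_1-s_3|/|s_2-s_3|$ for distinct $s_j$. Combine this with the uniform quasisymmetry and normalization of $h_n$, which make the $h_n$ equicontinuous on bounded sets. You then get asymptotic equicontinuity of $g_n$, a locally uniform continuous limit $g$, and the $\eta$-inequality for $g$ by passing to the limit. That is all Theorem~\ref{thm:MV} and your final step use.

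Alternatively, take $\gamma_n=f_n\circ f_\infty^{-1}$. Here $f_n\colon\R\to\Gamma_1^n$ are uniformly quasisymmetric parametrizations normalized at two points, which converge by Lemmas~\ref{lem:normal-boundary} and~\ref{lem:symmetric-blowup} to a quasisymmetric homeomorphism $f_\infty$ of $\R$. These $\gamma_n$ are uniformly quasisymmetric homeomorphisms converging to $\id$, and your argument then goes through as written.
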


Conversely, the case of maps onto $\mathbb{R}$ is handled via the inverse map, while asymptotic conformality of $\Gamma_2$ ensures that the normalized quasilines converge to $\mathbb{R}$.
However, an additional noncompactness issue arises here: a sequence of triples with diameters tending to zero need not have image diameters tending to zero. 
Thus, we assume uniform continuity of $h$, and the proof reduces to that of Theorem~\ref{thm:onto-line-intro}. 

\begin{theorem}\label{thm:symmetric-target-intro}
Let $\Gamma_1$ and $\Gamma_2$ be quasilines, and assume that $\Gamma_2$ is an
asymptotically conformal quasiline.  
Let $h\colon\Gamma_1\to\Gamma_2$ be a uniformly continuous quasisymmetric homeomorphism.  
If $h$ is $\mathrm{EWAS}$, then $h$ is $\mathrm{AS}$.
Consequently, the conditions $\mathrm{EWAS}$, $\mathrm{WAS}$, and $\mathrm{AS}$ are equivalent.
\end{theorem}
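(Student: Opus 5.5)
We argue by contradiction with a blow-up (normalization and compactness) argument, reducing to the case of Theorem~\ref{thm:onto-line-intro}, which we may assume.

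\emph{Setup.} Suppose $h$ is EWAS but not AS. Then there exist $\varepsilon>0$, $t>0$ and triples $a_n,b_n,x_n\in\Gamma_1$ with $\diam\{a_n,b_n,x_n\}=:\delta_n\to0$ such that
\[
|a_n-x_n|\le t|b_n-x_n| \quad\text{but}\quad |h(a_n)-h(x_n)|>(1+\varepsilon)t\,|h(b_n)-h(x_n)|.
\]
Uniform continuity of $h$ gives $\diam h(\{a_n,b_n,x_n\})\to0$. Quasisymmetry then gives $\diam h(\{a_n,b_n,x_n\})\asymp|h(b_n)-h(x_n)|$, with constants depending only on $\eta$ and $t$. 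Uniform continuity is exactly what removes the noncompactness issue mentioned in the introduction. Without it, small source triples could have large image triples, and the asymptotic conformality of $\Gamma_2$ would say nothing at that scale.

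\emph{Normalization.} Put $\rho_n=|h(b_n)-h(x_n)|\to0$, and consider the rescaled curves
\[
\Gamma_1^n=(\Gamma_1-x_n)/\delta_n,\qquad \Gamma_2^n=(\Gamma_2-h(x_n))/\rho_n,
\]
with the induced maps $h_n\colon\Gamma_1^n\to\Gamma_2^n$. These are quasilines with uniform bounded-turning constant, and the $h_n$ are $\eta$-quasisymmetric with $h_n(0)=0$. Both the source triple and the image triple are normalized to have size comparable to $1$.

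\emph{Passing to limits.} By the standard compactness of uniformly bounded-turning quasilines (Hausdorff convergence on compacta) and of normalized $\eta$-quasisymmetric maps (Arzel\`a--Ascoli via equicontinuity), we pass to a subsequence. Then $\Gamma_1^n\to\Gamma_1^\infty$, a quasiline, and $\Gamma_2^n\to\Gamma_2^\infty$. Since $\rho_n\to0$ and $\omega_{\Gamma_2}(r)\to0$, every subarc of $\Gamma_2^n$ of bounded size has excess $\omega_{\Gamma_2}(C\rho_n)\to0$. Hence $\Gamma_2^\infty$ is a straight line, which after a rotation we take to be $\R$. Moreover $h_n\to h_\infty$ locally uniformly, where $h_\infty\colon\Gamma_1^\infty\to\R$ is an $\eta$-quasisymmetric homeomorphism. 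To prove surjectivity and the homeomorphism property, we apply the same compactness to the inverses $h_n^{-1}$, which are $\eta'$-quasisymmetric with $\eta'(s)=1/\eta^{-1}(1/s)$.

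\emph{Properties of the limit and the contradiction.}
\begin{itemize}
\item \textbf{EWAS passes to the limit as exact symmetry.} Given $a,b,x\in\Gamma_1^\infty$ with $|a-x|=|b-x|$, approximate by points of $\Gamma_1^n$. For a fixed $x_n'$ near $x$, the function $w\mapsto|w-x_n'|$ restricted to $\Gamma_1^n$ attains the value $|a-x|$ near both $a$ and $b$ by the intermediate value theorem. This produces exactly equidistant triples in $\Gamma_1^n$ of scale $O(\delta_n)$. Since $H_h^{\mathrm{eq}}(C\delta_n)\to1$, we get $|h_\infty(a)-h_\infty(x)|=|h_\infty(b)-h_\infty(x)|$. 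So $h_\infty$ preserves all equidistant triples.
\item \textbf{The limit of the bad triples survives.} The normalized triples converge to $a,b,0$ with $|a|\le t|b|$ and $|h_\infty(a)|\ge(1+\varepsilon)t\,|h_\infty(b)|$; their limits are distinct by quasisymmetry. Thus $h_\infty$ is not AS at scale $1$.
\item \textbf{Contradiction.} The proof of Theorem~\ref{thm:onto-line-intro} shows that an exactly symmetric-triple-preserving quasisymmetric homeomorphism of a quasiline onto $\R$ is a similarity. It applies McKemie--Vaaler rigidity to $h_\infty^{-1}\colon\R\to\Gamma_1^\infty$, using the intermediate-value argument on inverse images of intervals. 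In particular $h_\infty^{-1}$ is affine, $\Gamma_1^\infty$ is a line, and $h_\infty$ is a similarity. A similarity satisfies $|h_\infty(a)|/|h_\infty(b)|=|a|/|b|\le t$, which contradicts the second bullet.
\end{itemize}

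\emph{Main obstacle.} The delicate step is transferring the rigidity statement of Theorem~\ref{thm:onto-line-intro} to the limit map $h_\infty$. That step must verify that $h_\infty$ inherits exact preservation of symmetric triples, and that the intermediate-value construction producing exactly equidistant approximating triples on $\Gamma_1^n$ is uniform in $n$. I would handle this by citing the structural lemma used in the proof of Theorem~\ref{thm:onto-line-intro}, stated for limits of normalized maps, rather than that theorem's final conclusion.
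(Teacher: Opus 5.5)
Your overall architecture matches the paper. You normalize at the scales $\delta_n$ and $\rho_n$, use uniform continuity to get $\rho_n\to0$, blow up $\Gamma_2$ to a line as in Lemma~\ref{lem:symmetric-blowup}, and conclude via McKemie--Vaaler applied to the inverse of the limit map. However, your first bullet, on which the rest depends, is not justified. At that stage $\Gamma_1^\infty$ is an arbitrary quasiline, and an equidistant triple $(a,x,b)$ on it need not be a limit of exactly equidistant triples on $\Gamma_1^n$. Your intermediate-value claim fails when $a$ or $b$ is a local extremum of $w\mapsto|w-x|$ on $\Gamma_1^\infty$. Suppose $\Gamma_1^\infty$ touches the circle $|w-x|=r$ from inside at $a$ and from outside at $b$. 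Then the approximating curves may satisfy $|w-x_n'|<r$ on the whole arc near $a$ and $|w-x_n'|>r$ on the whole arc near $b$, so there is no common value at all, let alone the value $r$. Nothing in the blow-up controls this tangential behaviour, and moving $x_n'$ does not obviously help. This is not a side issue. In your third bullet, the point $p(u+v)$ produced by the intermediate value theorem on the limit curve can be exactly such a tangency point. So exact preservation is needed precisely for the triples where your approximation breaks down. Lemma~\ref{lem:equidistant-approx} performs this kind of approximation only when the limit curve is already known to be $\R$.

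The repair is to never approximate on $\Gamma_1^\infty$. Instead, run the intermediate-value step at the finite level on the target side, whose limit is the line $\R$; this is what the paper does. Take $c_n,q_n^\pm\in\Gamma_2^n$ converging to $u,u\pm s$ on the two sides of $c_n$, and put $p_n=h_n^{-1}$. If $d^+>d^-$, choose $q_n'$ on the arc of $\Gamma_2^n$ from $c_n$ to $q_n^+$ with $|p_n(q_n')-p_n(c_n)|=|p_n(q_n^-)-p_n(c_n)|$. This is an \emph{exactly} equidistant triple in $\Gamma_1^n$ of original scale $O(\delta_n)$, so EWAS gives $|q_n'-c_n|/|q_n^--c_n|\to1$. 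The ordered convergence $\Gamma_2^n\to\R$ then forces $q_n'\to u+s$, hence $d^+=d^-$. Thus $p_\infty$ satisfies \eqref{eq:isosceles} on $\R$ and is affine by Theorem~\ref{thm:MV}. Your final contradiction then goes through unchanged, and your first bullet becomes unnecessary; it is true only a posteriori, once $h_\infty$ is known to be a similarity.
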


We mainly investigate quasisymmetric homeomorphisms between noncompact quasilines, but the compact cases for circles and bounded quasicircles follow from these noncompact cases either by lifting the homeomorphisms to the universal covers as equivariant mappings and applying the results for noncompact cases, or modifying the settings to compact cases and applying the arguments verbatim.
Thus, the results of Theorems \ref{thm:symmetric-source-intro} and \ref{thm:symmetric-target-intro} can be converted to the compact case as follows:
For bounded quasicircles $\Gamma_1$ and $\Gamma_2$ in $\C$ either of which is asymptotically conformal, any weakly asymptotically symmetric ($\mathrm{WAS}$) homeomorphism
$h\colon\Gamma_1\longrightarrow\Gamma_2$ is asymptotically symmetric ($\mathrm{AS}$).
This is given as Theorem \ref{thm:compact-one-symmetric}, which
is a partial answer to the aforementioned $\mathrm{WAS}$--$\mathrm{AS}$ problem of Brania and Yang.

The paper is organized as follows. 
Section~\ref{sec:preliminary_results} collects the preliminary results used throughout the paper, in particular,
the rigidity theorem of McKemie and Vaaler.
Section~\ref{sec:tukia_extension_and_proof_of_theorem_ref_thm_main_intro} develops the Tukia extension and proves Theorem~\ref{thm:main-intro}. 
Section~\ref{sec:proof_of_theorems_ref_thm_onto_line_intro} proves Theorems~\ref{thm:onto-line-intro},~\ref{thm:symmetric-source-intro}, and~\ref{thm:symmetric-target-intro}, and clarifies the noncompact and compact settings, thereby providing an answer to the $\mathrm{WAS}$--$\mathrm{AS}$ problem.
Finally, Section~\ref{sec:a_proof_by_modulus_of_curve_families} offers a modulus-based proof following the approach of Brania and Yang.

\section{Preliminary results}\label{sec:preliminary_results}

The rigidity theorem of McKemie and Vaaler will be used in the proofs of our main results, which gives the following classification
\cite[Theorem~1 and Corollary~2]{McKemieVaaler}.

\begin{theorem}[McKemie--Vaaler]\label{thm:MV}
Let $g\colon\R\to\C$ be continuous and satisfy
\begin{equation}\label{eq:isosceles}
 |g(x+s)-g(x)|=|g(x-s)-g(x)|\qquad(x,s\in\R).
\end{equation}
Then $g$ is constant, affine, or circular.  
In particular, every quasisymmetric embedding $g\colon\R\to\C$ satisfying \eqref{eq:isosceles} is affine.
\end{theorem}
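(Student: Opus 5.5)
The strategy is to prove that $g$ intertwines translations of $\R$ with a one-parameter group of Euclidean motions of $\C$. The orbits of such groups are exactly points, lines traversed affinely, and circles traversed at constant angular speed, which gives the trichotomy. To see this, write $d(u,v)=|g(u)-g(v)|$. Applying \eqref{eq:isosceles} at the centre $x=a+ks$ with offset $js$ gives
\[
d\bigl(a+ks,a+(k+j)s\bigr)=d\bigl(a+ks,a+(k-j)s\bigr)\qquad(j,k\in\mathbb Z).
\]
So for fixed $a\in\R$ and $s>0$, the sequence $p_k=g(a+ks)$ has the property that, seen from each vertex $p_k$, the points with index distance $j$ on either side are equidistant from $p_k$, for every $j$.

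\textbf{Step 1 (discrete lemma, the main obstacle).} Let $(p_k)_{k\in\mathbb Z}$ be a sequence in $\C$ with the property above. I will show that there is an orientation-preserving isometry $T$, a rotation or a translation, with $p_{k+1}=T(p_k)$ for all $k$, unless the sequence is degenerate. The degenerate cases are all $p_k$ equal, or $p_k$ alternating between two values.

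For the proof, take $j=1$. There is a rotation $\rho_k$ about $p_k$ with $\rho_k(p_{k-1})=p_{k+1}$; in complex form, $p_{k+1}-p_k=e^{i\theta_k}(p_{k-1}-p_k)$. Using $j=2$ at $p_k$, the conditions $|p_{k+2}-p_k|=|p_{k-2}-p_k|$ compare the two triangles $p_k p_{k+1} p_{k+2}$ and $p_k p_{k-1} p_{k-2}$. These triangles already share two equal sides, because $|p_{k\pm1}-p_k|$ agree and the consecutive step lengths are all equal. So the angles at $p_{k\pm1}$ agree, which means $\theta_{k+1}$ equals $\theta_{k-1}$ or $-\theta_{k-1}$. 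The orientation ambiguity is where the real difficulty lies.

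To resolve it, I will use all $j$ together. If the signs ever flip, I will try to show that the resulting ``reflected'' configuration violates the condition for some $j\ge 3$, unless the points are collinear or degenerate. In the collinear case an equally spaced line or a zigzag is forced. Once all turning angles are equal, say $\theta$, the step map $p_k\mapsto p_{k+1}$ is a fixed rotation by $\pi-\theta$, or a translation when $\theta=\pi$.

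\textbf{Step 2 (consistency across scales).} For fixed $a$ and $s$, apply Step 1 to the lattices $a+2^{-n}s\mathbb Z$ for all $n\ge 0$. The motion $T_n$ for step $2^{-n}s$ satisfies $T_{n+1}^2=T_n$ on the orbit, so the $T_n$ are compatible roots. This defines $T_t$ for dyadic $t$ with $g(a+t)=T_t(g(a))$.

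The degenerate zigzag cases must be excluded, or shown to force $g$ to be constant, using continuity. A zigzag at every dyadic scale means $g$ oscillates on arbitrarily fine grids. Continuity then forces the oscillation amplitude to be $0$, so those points coincide; I will propagate this with the first identity to get that $g$ is constant.

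In the nondegenerate case, the rotation angles and translation vectors of $T_{2^{-n}s}$ tend to $0$ by continuity, so they are additive in $t$. Hence $t\mapsto T_t$ is a homomorphism from the dyadics into the motion group, and it extends by continuity to a continuous one-parameter group $T_t$ with $g(a+t)=T_t(g(a))$ for all real $t$.

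\textbf{Step 3 (conclusion).} A continuous one-parameter group of orientation-preserving motions of $\C$ is either $z\mapsto z+tv$ or $z\mapsto c+e^{i\lambda t}(z-c)$. Consequently $g(a+t)$ is $g(a)+tv$, which is affine (constant if $v=0$), or $c+e^{i\lambda t}(g(a)-c)$, which is circular. The choice of $a$ is irrelevant, since the orbit through $g(a)$ determines $g$ on all of $\R$.

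For the final clause of Theorem~\ref{thm:MV}, a quasisymmetric embedding is injective and unbounded, which rules out both the constant and the circular case. Hence it must be affine.
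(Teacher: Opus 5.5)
\textbf{There is a genuine gap in Step~1: the discrete lemma is false, so Steps~2--3 have nothing to stand on.}

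The $j=2$ comparison gives $\theta_{k+1}=\pm\theta_{k-1}$, which relates only vertices of the same parity. Nothing in your argument relates $\theta_k$ to $\theta_{k+1}$. The condition for all $j$ does not force $\theta_k=\theta_{k+1}$ either. Each of the following sequences satisfies $|p_{k+j}-p_k|=|p_{k-j}-p_k|$ for all $j,k\in\mathbb{Z}$, yet has $2$-periodic but nonconstant turning angles:
\begin{itemize}
\item[(a)] $p_k=k+i(-1)^k$. Here $p_{k-j}-p_k=-\overline{(p_{k+j}-p_k)}$. This is the orbit of a glide reflection, with $\theta_{k+1}=-\theta_k$. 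Since $p_0,p_2,p_4$ lie on $\{\mathrm{Im}\,z=1\}$ and $p_1$ does not, it lies on no line or circle. So it is not the orbit of a rotation or translation, and it is not degenerate.
\item[(b)] $p_k=c_ke^{ik\phi}$ with $c_{2m}=1$, $c_{2m+1}=-2$ and $e^{2i\phi}\neq 1$. Since $c_{k+j}=c_{k-j}$ is real, $p_{k+j}-p_k$ and $p_{k-j}-p_k$ are complex conjugates up to the common factor $e^{ik\phi}$. Here $p_{k+2}=e^{2i\phi}p_k$, so $\theta_{k+2}=\theta_k$ and no sign flip occurs at all. Yet $|p_3-p_1|=2|p_2-p_0|$, so no isometry sends every $p_k$ to $p_{k+1}$.
\item[(c)] $0,1,0,-1,0,1,0,-1,\dots$, with $\theta_{\mathrm{even}}=\pi$ and $\theta_{\mathrm{odd}}=0$. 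It is collinear but is neither an arithmetic progression nor a two-valued alternation, so your list of degenerate cases is incomplete.
\end{itemize}
Example (a) shows that your plan of proving that a ``reflected'' configuration violates the condition for some $j\geq 3$ cannot succeed. Example (b) shows that even after the signs are settled, ``all turning angles are equal'' does not follow.

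\textbf{What is missing.} The hypothesis restricted to a single lattice $a+s\mathbb{Z}$ cannot produce the motion $T$. These patterns can only be excluded by using the hypothesis at other scales and base points, together with continuity. For instance, $a+\tfrac{s}{2}\mathbb{Z}$ also satisfies the discrete condition and contains $(p_k)$ as its even-indexed subsequence. Your Step~2 does none of this beyond the two-valued alternation.

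The hypothesis says exactly that, for each $s$, the function $x\mapsto|g(x+s)-g(x)|$ is $s$-periodic. Your one-parameter-group conclusion needs it to be independent of $x$. Once $|g(u)-g(v)|$ depends only on $|u-v|$, the maps $g(u)\mapsto g(u+t)$ are well defined and distance preserving on the image, which is what your Steps~2--3 really use. Upgrading periodicity to constancy is the substance of the McKemie--Vaaler theorem.

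The paper does not reprove that theorem; it simply cites it. The only argument the paper itself supplies is that unboundedness excludes the circular alternative. Your last paragraph, using injectivity and unboundedness to rule out the constant and circular cases, matches this and is fine.
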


More explicitly, the nonconstant alternatives in their classification are $g(x)=Ax+B$ and $g(x)=Ae^{2\pi i\theta x}+B$.  
An embedding of $\R$ into $\C$ that extends to $\infty$ is unbounded, so the circular alternative is excluded.
We say that a quasisymmetric embedding $g\colon\R\to\C$ is {\it weakly $1$-quasisymmetric} if it satisfies the condition \eqref{eq:isosceles}.

Theorem~\ref{thm:MV} is genuinely planar.  
Weakly $1$-quasisymmetric embeddings of $\R$ into higher-dimensional Euclidean spaces need not be affine; thus the planar rigidity is essential here.

\begin{remark}
For a continuous map $\gamma\colon \mathbb S \to \mathbb C$ satisfying
\begin{equation}\label{eq:isosceles-circle}
|\gamma(e^{2\pi i(x+s)})-\gamma(e^{2\pi ix})|=|\gamma(e^{2\pi i(x-s)})-\gamma(e^{2\pi ix})|\qquad(x,s\in\R),
\end{equation}
we can assert a similar conclusion to Theorem~\ref{thm:MV}.
This is because $g=\gamma \circ p$ for $p(x)=e^{2\pi ix}$ satisfies \eqref{eq:isosceles}.
In particular, every weakly $1$-quasisymmetric embedding of $\mathbb S$ is circular.
\end{remark}

We shall use the compactness argument for normalized quasisymmetric and quasiconformal mappings.
For those mappings $g$ on $\mathbb R$ and $\mathbb C$, the normalization is given by fixing three points $0$, $1$, and $\infty$.
The last condition is assumed as $\lim_{z \to \infty} g(z)=\infty$ and this is automatic for
quasisymmetric embeddings and quasiconformal self-homeomorphisms.
The convergence of quasisymmetric and quasiconformal mappings is uniform with respect to the spherical metric of 
the Riemann sphere $\widehat{\mathbb C}=\mathbb C \cup \{\infty\}$.
This implies local uniform convergence on each compact subset of $\mathbb C$ in the Euclidean metric.

The following facts follow from the usual normal-family theory; see, for example,
\cite{Ahlfors,VaisalaQC}.

\begin{lemma}\label{lem:normal-boundary}
Let $\eta$ be a distortion function. 
A family of $\eta$-quasisymmetric embeddings $g_n\colon\R\to\C$ normalized by $g_n(0)=0$, $g_n(1)=1$, and
$g_n(\infty)=\infty$ is compact under uniform convergence in the spherical metric. 
Furthermore, any limit of a convergent subsequence is also a normalized $\eta$-quasisymmetric embedding.
\end{lemma}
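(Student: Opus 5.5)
The lemma has two parts: compactness of the normalized family, and the fact that a limit is again a normalized $\eta$-quasisymmetric embedding. I would prove compactness with Arzelà--Ascoli on $\widehat{\R}=\R\cup\{\infty\}$ with the chordal metric, and then pass to the limit in the quasisymmetry inequality.

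\textbf{Step 1: uniform bounds.} Fix $M>0$ and $x\in[-M,M]$. Applying the $\eta$-condition to the triple $(x,1,0)$ gives
\[
|g_n(x)|=|g_n(x)-g_n(0)|\le \eta(|x|)\,|g_n(1)-g_n(0)|=\eta(M).
\]
So the family is uniformly bounded on compacts. For a lower bound when $|x|\ge M$, compare $0$ and $x$ from the base point $1$, or $1$ and $x$ from $0$, using the inverse inequality $|g_n(x)|\ge 1/\eta(1/|x|)$. Since $\eta(t)\to0$ as $t\to0$, this shows $|g_n(x)|\to\infty$ as $|x|\to\infty$, uniformly in $n$.

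\textbf{Step 2: equicontinuity.} For $x,y\in[-M,M]$, take a third point $z$ with $|z-x|\ge1$, for instance $z=x+1$. Quasisymmetry gives
\[
|g_n(y)-g_n(x)|\le \eta(|y-x|)\,|g_n(z)-g_n(x)|\le 2\eta(M+1)\,\eta(|y-x|).
\]
This tends to $0$ as $|y-x|\to0$, uniformly in $n$. Combined with the uniform escape to $\infty$ from Step 1, this gives spherical equicontinuity on all of $\widehat{\R}$, including at $\infty$. Arzelà--Ascoli on the compact space $\widehat{\R}$ then yields a subsequence converging uniformly in the spherical metric.

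\textbf{Step 3: the limit.} Let $g$ be the limit. The normalization $g(0)=0$, $g(1)=1$ passes to the limit directly, and $g(\infty)=\infty$ follows from the uniform escape in Step 1. The main point is injectivity. Take $a\ne b$ in $\R$. With base point $0$ (or $1$ if $0$ is one of $a,b$), the inverse inequality gives
\[
|g_n(a)-g_n(b)|\ge \frac{|g_n(c)-g_n(b)|}{\eta\bigl(|c-b|/|a-b|\bigr)}
\]
for a fixed $c\ne b$. Iterating once more to bound $|g_n(c)-g_n(b)|$ below by a positive constant depending only on $\eta$, $a$, $b$ and the normalization shows that $|g_n(a)-g_n(b)|$ is bounded below independently of $n$. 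Hence $g(a)\ne g(b)$.

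Finally, the non-strict inequality $|a-x|\le t|b-x| \Rightarrow |g_n(a)-g_n(x)|\le\eta(t)|g_n(b)-g_n(x)|$ survives pointwise limits. So $g$ is an $\eta$-quasisymmetric injective continuous map. It is also a homeomorphism onto its image, since its inverse is uniformly controlled by the same inequality, or more simply because an injective continuous proper map $\R\to\C$ is an embedding.

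I expect the lower bound in Step 3 (non-degeneracy of the limit) to be the only delicate step, and it is handled by the normalization at three points.
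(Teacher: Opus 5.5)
Your proof is correct, and it takes a different route from the paper. The paper gives no argument of its own for this lemma. It only says the statement follows from standard normal-family theory and cites Ahlfors and V\"ais\"al\"a. In this setting that would typically mean extending each $g_n$ to a quasiconformal map of $\C$ whose dilatation depends only on $\eta$ (Tukia), then applying compactness of normalized $K$-quasiconformal maps as in Lemma~\ref{lem:normal-qc}.

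You instead work intrinsically on $\widehat{\R}$. The two-sided bounds $1/\eta(1/|x|)\le|g_n(x)|\le\eta(|x|)$, equicontinuity on compacts and at $\infty$, Arzel\`a--Ascoli, and the $n$-independent lower bound for $|g_n(a)-g_n(b)|$ all come from the three-point inequality and the normalization alone, with no extension theorem. This is more elementary. Writing the inequality in its closed, division-free form also lets you conclude directly that the limit is quasisymmetric with the same $\eta$; the extension route would still need this same limiting step. In exchange, the extension route also gives convergence of the planar extensions and their inverses, which is the form several later arguments in the paper rely on.

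Two cosmetic points. In Step 3, the displayed inequality has base point $b$ and auxiliary point $c=0$ (or $c=1$), not base point $0$. In Step 1, the bound $|g_n(x)|\le\eta(M)$ at $x=1$ needs $\eta(M)\ge1$. Either take $M\ge1$ and note that $\eta(1)\ge1$, which follows by swapping $a$ and $b$ in an equidistant triple, or simply use $\max\{1,\eta(M)\}$.

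Injectivity can also be had in one line once the limiting inequality is established. If $g(a)=g(x)$ with $a\ne x$, the inequality forces $g\equiv g(x)$, which contradicts $g(0)\ne g(1)$.
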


\begin{lemma}\label{lem:normal-qc}
Let $K\ge1$. 
A family of $K$-quasiconformal homeomorphisms $G_n\colon\C\to\C$ fixing $0,1,\infty$ is compact under uniform convergence in the spherical metric. 
In particular, every sequence $G_n$ in this family admits a subsequence, still denoted by $G_n$, such that both $G_n$ and $G_n^{-1}$ converge uniformly in the spherical metric.
If, in addition, the complex dilatations of $G_n$ satisfy
\[
 \esssup_{z\in E}|\mu_{G_n}(z)|\to 0 \quad(n \to \infty)
\]
for every compact subset $E\subset\C$, then every subsequential limit is conformal and hence, by the normalization, is the identity.
\end{lemma}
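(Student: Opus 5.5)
The plan is to prove the three assertions in order: compactness, convergence of the inverses, and identification of the limit as the identity. For compactness I will use the standard fact that $K$-quasiconformal self-homeomorphisms of $\widehat{\C}$ fixing $0,1,\infty$ are uniformly equicontinuous in the spherical metric. This follows, for example, from the modulus estimate for ring domains (Teichmüller's extremal ring) or from their uniform $\eta_K$-quasisymmetry on $\C$ \cite{Ahlfors,VaisalaQC}. By Arzelà--Ascoli, every sequence $G_n$ then has a uniformly convergent subsequence with limit $G$.

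The limit $G$ is a homeomorphism. Because three distinct points are fixed, the family cannot degenerate to a map that is constant off one point. Moreover, the limit of $K$-quasiconformal maps is either constant, or takes only two values, or is a $K$-quasiconformal homeomorphism (Lehto--Virtanen \cite{LV71}). Hence $G$ is $K$-quasiconformal and fixes $0,1,\infty$.

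For the inverses, note that each $G_n^{-1}$ is also $K$-quasiconformal and fixes $0,1,\infty$. Applying the compactness again to a further subsequence, $G_n^{-1}\to F$ uniformly. Then $F\circ G=\id$ follows from the estimate
\[
 d\bigl(G_n^{-1}(G_n(z)),F(G(z))\bigr)\le d\bigl(G_n^{-1}(G_n(z)),G_n^{-1}(G(z))\bigr)+d\bigl(G_n^{-1}(G(z)),F(G(z))\bigr),
\]
where $d$ is the spherical metric. The first term tends to zero by the uniform equicontinuity of the $G_n^{-1}$, and the second by uniform convergence. Hence $F=G^{-1}$.

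For the last assertion, I will work with the geometric definition through quadrilaterals. Fix a quadrilateral $Q$ with $\overline Q\subset\C$ compact, and let $E$ be a compact neighbourhood of $\overline Q$. Set $K_n(E)=(1+k_n)/(1-k_n)$, where $k_n=\esssup_{E}|\mu_{G_n}|$. Since $G_n$ is $K_n(E)$-quasiconformal on a neighbourhood of $\overline Q$, we have
\[
 \Mod G_n(Q)\le K_n(E)\Mod Q, \qquad K_n(E)\to1 .
\]
Uniform convergence $G_n\to G$ on $\overline Q$ implies that the image quadrilaterals $G_n(Q)$ converge to $G(Q)$ together with their vertices, as Jordan domains. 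By the continuity of the conformal modulus under such convergence, which can be seen through Carathéodory kernel convergence or directly through extremal length, $\Mod G_n(Q)\to\Mod G(Q)$. Therefore $\Mod G(Q)\le\Mod Q$ for every such $Q$. Hence $G$ is $1$-quasiconformal on $\C$, and by Weyl's lemma it is conformal. A conformal self-homeomorphism of $\C$ is affine, and fixing $0$ and $1$ forces $G=\id$.

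The main obstacle is this passage of the dilatation bound to the limit, that is, the continuity of the modulus under uniform convergence of quadrilaterals. If that step proves awkward, I would fall back on the analytic route. Write $\mu_{G_n}=\mu_n$. The derivatives $\partial G_n$ are bounded in $L^2_{\mathrm{loc}}$, so after passing to a subsequence we have $\bar\partial G_n\to\bar\partial G$ and $\partial G_n\to\partial G$ weakly in $L^2_{\mathrm{loc}}$. The identity $\bar\partial G_n=\mu_n\,\partial G_n$ with $\mu_n\to0$ uniformly on compacta then gives $\bar\partial G=0$ in the distributional sense. Again by Weyl's lemma, $G$ is conformal.
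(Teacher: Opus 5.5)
Your proposal is correct and is essentially the argument the paper intends: the paper gives no proof of this lemma and simply defers to standard normal-family theory (Ahlfors, V\"ais\"al\"a). You have written that standard argument out: equicontinuity from the three-point normalization with Arzel\`a--Ascoli, closedness of the $K$-quasiconformal class, convergence of the inverses via equicontinuity of $G_n^{-1}$, and passage of the local bound $K_n(E)\to 1$ to the limit through continuity of the modulus of quadrilaterals, with your weak-$L^2$ route giving $\bar\partial G=0$ equally well.
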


We say that a quasiline $\Gamma$ is \emph{normalized} if it contains $0$ and $1$ (and extends to $\infty$). 
A sequence of quasilines $\Gamma_n$ is said to converge to a quasiline $\Gamma_\infty$ if there exist quasisymmetric embeddings $g_n$ satisfying $g_n(\mathbb R) = \Gamma_n$ that converge uniformly in the spherical metric to a quasisymmetric embedding $g_\infty$ with $g_\infty(\mathbb R) = \Gamma_\infty$.

\begin{lemma}\label{lem:symmetric-blowup}
Let $\Gamma$ be an asymptotically conformal quasiline, and let $x_n, y_n \in \Gamma$ be sequences such that $r_n = |y_n-x_n| \to 0$.
Set
\[
 S_n(z)=\frac{z-x_n}{y_n-x_n}, \qquad \Gamma_n=S_n(\Gamma).
\]
Then every subsequential limit of $\Gamma_n$ in the spherical topology is $\R$.
\end{lemma}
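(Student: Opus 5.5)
We need to prove Lemma~\ref{lem:symmetric-blowup}: every subsequential limit of the rescaled quasilines $\Gamma_n=S_n(\Gamma)$ is $\R$.

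First we set up the limit. Fix a quasisymmetric parametrization $f\colon\R\to\C$ of $\Gamma$ with distortion $\eta$. Write $x_n=f(u_n)$ and $y_n=f(v_n)$, and after an affine reparametrization of $\R$ assume $u_n=0$, $v_n=1$. The maps $g_n=S_n\circ f\circ A_n$, with $A_n$ affine, are then $\eta$-quasisymmetric, since similarities and affine changes of $\R$ preserve $\eta$. They are normalized by $g_n(0)=0$ and $g_n(1)=1$. By Lemma~\ref{lem:normal-boundary}, any subsequence has a further subsequence converging uniformly in the spherical metric to a normalized $\eta$-quasisymmetric embedding $g_\infty$. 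Its image $\Gamma_\infty$ is a quasiline through $0$ and $1$. Since the limit is only defined up to the choice of such parametrizations, it suffices to show that $\Gamma_\infty=\R$ for every limit obtained this way.

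Next we transfer the defining quantity to the limit. For $n$ fixed, the quantity in \eqref{def:symmetric-quasiline} is scale invariant, so
\[
\omega_{\Gamma_n}(R)=\omega_\Gamma(Rr_n)\to0 \qquad\text{for each fixed } R>0.
\]
Take $a,b\in\Gamma_\infty$ and $w\in\Gamma_\infty[a,b]$, and write $a=g_\infty(\alpha)$, $b=g_\infty(\beta)$, $w=g_\infty(\tau)$ with $\tau$ between $\alpha$ and $\beta$. Put $a_n=g_n(\alpha)$, $b_n=g_n(\beta)$, $w_n=g_n(\tau)$. Uniform convergence on the compact interval gives $a_n\to a$, $b_n\to b$, $w_n\to w$. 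Moreover $w_n\in\Gamma_n[a_n,b_n]$, because the bounded subarc is the image of the parameter interval. Then
\[
|a_n-w_n|+|w_n-b_n|\le\bigl(1+\omega_{\Gamma_n}(|a_n-b_n|)\bigr)|a_n-b_n|.
\]
Since $|a_n-b_n|\le 2|a-b|$ eventually, the right-hand side tends to $|a-b|$. Passing to the limit gives $|a-w|+|w-b|=|a-b|$, so $w$ lies on the segment $[a,b]$.

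Now we conclude. Every subarc $\Gamma_\infty[a,b]$ lies in the segment $[a,b]$. A Jordan arc contained in a segment and joining its endpoints is the segment itself, so every subarc of $\Gamma_\infty$ is a straight segment. Take the subarcs joining $0$ to $g_\infty(\pm t)$ as $t\to\infty$; these nested segments all contain $0$ and $1$. Hence they lie on the line $\R$, and their union is an unbounded connected set in $\R$ that is unbounded in both directions. Therefore $\Gamma_\infty=\R$.

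The main obstacle is the middle step, where the $\omega$ condition has to survive the passage to the limit. This needs two things. First, the arc structure must pass to the limit, which we get by working through the parametrizations rather than with Hausdorff limits of sets. Second, the small-scale condition must become global in the limit, which is exactly what the scale invariance of $\omega$ provides. We also need to check that spherical uniform convergence gives Euclidean convergence on compact parameter sets; this holds because $g_\infty$ is finite there.
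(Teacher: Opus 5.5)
Your proof is correct and follows essentially the same route as the paper: you normalize parametrizations via Lemma~\ref{lem:normal-boundary}, track points $a_n,b_n,w_n$ at fixed parameters, and use scale invariance of the $\omega$-quantity (your identity $\omega_{\Gamma_n}(R)=\omega_\Gamma(Rr_n)$ is the paper's observation $\rho_n=r_n|a_n-b_n|\to0$) to conclude that every compact subarc is a segment. One small fix in the last step: the arc from $0$ to $g_\infty(-t)$ need not contain $1$, so use the nested subarcs $g_\infty([-t,t])$, $t\ge1$, which contain both $0$ and $1$, to place $\Gamma_\infty$ inside $\R$.
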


\begin{proof}
Since $\Gamma$ is a quasiline and $S_n$ is a similarity, the curves $\Gamma_n$ are normalized quasilines with uniform quasisymmetric control, so a subsequence converges to a normalized quasiline $\Gamma_\infty$ by Lemma~\ref{lem:normal-boundary}.
Let $a,b$ be distinct points of $\Gamma_\infty$, and let $w$ lie on the compact subarc $\Gamma_\infty[a,b]$.  
Using the normalized quasisymmetric homeomorphisms $g_n\colon\R\to \Gamma_n$, choose points $a_n,b_n,w_n\in\Gamma_n$ converging respectively to $a,b,w$ and respecting the order on the compact subarcs.  

Let $\tilde{a}_n, \tilde{b}_n, \tilde{w}_n \in \Gamma$ denote their respective preimages under $S_n$. 
The distance between the preimages is given by 
\[
 |\tilde{a}_n-\tilde{b}_n| = r_n|a_n-b_n|.
\]
Since the sequence $|a_n-b_n|$ converges to $|a-b|$ and is thus bounded, and since $r_n \to 0$, we have $\rho_n \coloneqq  |\tilde{a}_n-\tilde{b}_n| \to 0$ as $n \to \infty$. 

The definition of $\omega_\Gamma$ in ~\eqref{def:symmetric-quasiline} therefore gives
\[
 \frac{|a_n-w_n|+|w_n-b_n|}{|a_n-b_n|} = \frac{|\tilde{a}_n-\tilde{w}_n|+|\tilde{w}_n-\tilde{b}_n|}{|\tilde{a}_n-\tilde{b}_n|} \leq 1 + \omega_\Gamma(\rho_n).
\]
This can be rewritten as
\[
 |a_n-w_n|+|w_n-b_n| \leq \bigl(1+\omega_\Gamma(\rho_n)\bigr)|a_n-b_n|.
\]
Since $\rho_n \to 0$ implies $\omega_\Gamma(\rho_n) \to 0$ by the asymptotic conformality of $\Gamma$,
passing to the limit as $n \to \infty$, we obtain
\[
 |a-w|+|w-b| \leq |a-b|.
\]
Combined with the triangle inequality, this yields
\[
 |a-w|+|w-b|=|a-b|.
\]
Hence $w$ belongs to the Euclidean segment $[a,b]$.  
Every finite compact subarc of $\Gamma_\infty$ is therefore a line segment. 
Thus $\Gamma_\infty$ is a straight line, and because it contains $0$ and $1$, it is $\R$.
\end{proof}

\begin{lemma}\label{lem:equidistant-approx}
Let $\Gamma_n$ be quasilines converging locally to $\R$, with the two components of $\Gamma_n\setminus\{c_n\}$ converging in order to the two half-lines determined by $x\in\R$, where $c_n\to x$.  
For every $s>0$ there are $a_n,b_n\in\Gamma_n$ such that
\[
 a_n\to x+s,\qquad b_n\to x-s,
 \qquad |a_n-c_n|=|b_n-c_n|.
\]
\end{lemma}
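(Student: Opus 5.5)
The approach is an intermediate-value argument: along each of the two arcs of $\Gamma_n$ emanating from $c_n$, the distance to $c_n$ varies continuously and passes through the same value on both sides.

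First, fix the parametrization. Since $\Gamma_n\to\R$, there are quasisymmetric embeddings $g_n\colon\R\to\C$ with $g_n(\R)=\Gamma_n$ converging locally uniformly to a quasisymmetric parametrization $g_\infty$ of $\R$. Without loss of generality $g_\infty=\id$. This can be arranged by composing $g_n$ with the fixed homeomorphism $g_\infty^{-1}$ of $\R$; the composite is still a parametrization converging locally uniformly, and quasisymmetry of $g_n$ plays no further role. Pick $t_n\in\R$ with $g_n(t_n)=c_n$. Because $g_n\to\id$ locally uniformly and $c_n\to x$, we get $t_n\to x$. The hypothesis on the components means that the ray $g_n([t_n,\infty))$ is the one approaching $[x,\infty)$ and $g_n((-\infty,t_n])$ the one approaching $(-\infty,x]$. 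This orientation fixes which side $a_n$ and $b_n$ are taken from.

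Next, define the two distance functions
\[
\phi_n(u)=|g_n(t_n+u)-c_n|,\qquad \psi_n(u)=|g_n(t_n-u)-c_n|\qquad(u\ge0).
\]
Both are continuous, vanish at $u=0$, and converge locally uniformly to $|x+u-x|=u$ and $|x-u-x|=u$ respectively. Choose $a_n=g_n(t_n+u_n)$ with $u_n\to s$, and find $v_n$ with $\psi_n(v_n)=\phi_n(u_n)$. The simplest choice is $u_n=s$, so $\phi_n(s)\to s$. Since $\psi_n(2s)\to 2s>s$ and $\psi_n(0)=0$, the intermediate value theorem gives, for large $n$, some $v_n\in[0,2s]$ with $\psi_n(v_n)=\phi_n(s)$. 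Put $b_n=g_n(t_n-v_n)$, so that $|a_n-c_n|=|b_n-c_n|$ holds by construction.

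The step needing the most care is showing $v_n\to s$, because $\psi_n$ need not be monotone and so could hit the value $\phi_n(s)$ at some unexpected $v_n$. Uniform convergence on the compact interval $[0,2s]$ handles this. Indeed, $\psi_n(v_n)=v_n+o(1)$ uniformly in $v_n\in[0,2s]$, while $\psi_n(v_n)=\phi_n(s)=s+o(1)$. Together these give $v_n=s+o(1)$.

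Finally, $a_n=g_n(t_n+s)\to x+s$ and $b_n=g_n(t_n-v_n)\to x-s$, again by locally uniform convergence together with $t_n\to x$ and $v_n\to s$. For finitely many small $n$, where the intermediate value argument may not yet apply, choose $a_n,b_n$ arbitrarily; this does not affect the limits.
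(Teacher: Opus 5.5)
Your proof is correct. It reaches the conclusion by a somewhat different construction from the paper's.

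\textbf{How the two routes differ.} The paper takes $a_n$ and $b_n$ to be the two endpoints of the component of $\Gamma_n\cap\{|z-c_n|\le s\}$ containing $c_n$, that is, the first exit points from the closed disk along the two arcs.
\begin{itemize}
\item This gives $|a_n-c_n|=|b_n-c_n|=s$ exactly, with no intermediate value argument.
\item The paper then proves $a_n\to x+s$ by compactness: any subsequential limit lies on the correct half-line at distance $s$ from $x$, so it must be $x+s$.
\end{itemize}
You instead fix $a_n=g_n(t_n+s)$ by parameter, so $a_n\to x+s$ is immediate, and you put all the work on the other arc.
\begin{itemize}
\item The intermediate value theorem on $[0,2s]$ produces $v_n$.
\item The uniform estimate $\sup_{v\in[0,2s]}|\psi_n(v)-v|\to0$ then gives $v_n\to s$ directly. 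This needs no subsequences and is unaffected by $\psi_n$ being non-monotone.
\end{itemize}
The paper's choice is symmetric and makes the common distance exactly $s$. Yours is asymmetric but fully quantitative. It also makes explicit the parameter control that the paper's step ``the limit point $a_\infty$ must lie on the positive component'' leaves implicit.

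\textbf{Two points to tighten.}

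First, $t_n\to x$ does not follow from locally uniform convergence alone, because a priori $t_n$ could escape to infinity. It does follow from the paper's notion of convergence of quasilines, which is uniform convergence in the spherical metric.
\begin{itemize}
\item Uniform spherical convergence is preserved when you precompose with $g_\infty^{-1}$, since $\sup_y\sigma(g_n(g_\infty^{-1}(y)),y)=\sup_t\sigma(g_n(t),g_\infty(t))$.
\item So if $|t_n|\to\infty$ along a subsequence, then $c_n=g_n(t_n)\to\infty$ spherically, contradicting $c_n\to x$.
\item Once $t_n$ is bounded, local uniform convergence gives $t_n\to x$, and the rest of your argument takes place on a compact parameter set.
\end{itemize}

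Second, for the finitely many initial $n$ you should not choose $a_n,b_n$ arbitrarily, since $|a_n-c_n|=|b_n-c_n|$ is required for every $n$. This is easy to repair. The arc $g_n((-\infty,t_n])$ is unbounded, so $\psi_n$ takes every value in $[0,\infty)$. Hence the intermediate value theorem gives a suitable $v_n$ for every $n$.
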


\begin{proof}
For every $s>0$, let $a_n$ and $b_n$ be the endpoints of the connected component of 
\[
\Gamma_n \cap \{z : |z-c_n| \leq s\}
\]
containing $c_n$, lying respectively on the positive and negative components of $\Gamma_n \setminus \{c_n\}$.
These points satisfy $|a_n-c_n|=|b_n-c_n|=s$.

To show that $a_n \to x+s$, observe first that the sequence $a_n$ is bounded since $|a_n - c_n| = s$ and $c_n \to x$. 
Suppose to the contrary that a subsequence $a_{n_k}$ stays away from $x+s$. By compactness, we can extract a further convergent sub-subsequence $a_{n_{k_j}}$ converging to some limit $a_\infty$. 

Since $\Gamma_n$ converges locally to $\mathbb{R}$ and $a_{n_{k_j}}$ lies on the positive component of $\Gamma_{n_{k_j}}\setminus\{c_{n_{k_j}}\}$, the limit point $a_\infty$ must lie on the positive component of $\mathbb{R}\setminus\{x\}$. 
Passing to the limit in $|a_{n_{k_j}} - c_{n_{k_j}}| = s$ yields $|a_\infty - x| = s$, which forces $a_\infty =x+s$. 
This contradicts the assumption that the subsequence stays away from $x+s$.

A completely analogous argument shows that $b_n \to x-s$. This completes the proof.
\end{proof}

\section{Tukia extension and proof of Theorem~\ref{thm:main-intro}}\label{sec:tukia_extension_and_proof_of_theorem_ref_thm_main_intro}

In this section, we prove Theorem~\ref{thm:main-intro} by using quasiconformal extension. 
The flow of the argument is that a quasisymmetric embedding $g\colon \mathbb R \to \mathbb C$ has an asymptoticaly conformal extension if $g$ is $\mathrm{EWAS}$, and if $g$ admits such a quasiconformal extension then it should be $\mathrm{AS}$.
We also provide an alternative proof without using quasiconformal extension.

\subsection{Tukia's quasiconformal extension}

We recall the construction in \cite[Sections~3--5]{TukiaExtension}.  Suppose first that a quasisymmetric embedding $g\colon\R\to\C$ is normalized by $g(0)=0, g(1)=1, g(\infty)=\infty$.
The curve $g(\mathbb R)$ is a quasiline.  
Let $\Omega_g^+$ denote the component of $\mathbb C \setminus g(\mathbb R)$ lying above $g(\mathbb R)$, and let
\[
 A_g\colon\Hh\to\Omega_g^+
\]
be the normalized conformal map fixing $0,1,\infty$ in the boundary sense.
Then
\[
 h_g=A_g^{-1}\circ g\colon\R\to\R
\]
is a normalized increasing quasisymmetric homeomorphism.

For an increasing homeomorphism $h\colon\R\to\R$, we define
\begin{equation*}
 B_h(x,y)=\frac12(\alpha_h+\beta_h)
 +i(\alpha_h-\beta_h),
\end{equation*}
where
\begin{equation}\label{eq:alphabeta}
 \alpha_h(x,y)=\int_0^1h(x+ty)\,dt,
 \qquad
 \beta_h(x,y)=\int_0^1h(x-ty)\,dt.
\end{equation}
This gives a modified Beurling--Ahlfors extension. The original definition used $\frac{i}{2}(\alpha_h-\beta_h)$ for the imaginary part, but we adjust it here so that $B_{\rm id}(x,y)=x+iy$. 
It is well known that the Beurling--Ahlfors extension is a quasiconformal homeomorphism of the plane, with maximal dilatation depending only on the equidistant weak quasisymmetry constant of $g$; see \cite[Section~6]{BA}. 
Our modified extension $B_h$ differs from the standard Beurling--Ahlfors extension only by post-composition with an affine map, and hence is quasiconformal as well.

The upper half-plane part of Tukia extension is
\begin{equation*}
 \mathcal E g|_{\Hh}=A_g\circ B_{h_g}.
\end{equation*}
Since $A_g$ is conformal and $B_{h_g}$ is quasiconformal, their composition is quasiconformal.
The lower half-plane part is defined analogously. 

A key property of Tukia extension is its affine covariance; see \cite[Section~5]{TukiaExtension}. More precisely, if $\phi\colon\R\to\R$ is an increasing affine translation and $\psi\colon\C\to\C$ is a similarity (conformal affine map), then, after normalization,
\begin{equation}\label{eq:Tukia-naturality}
 \mathcal E(\psi\circ g\circ\phi)
 =\psi\circ\mathcal E(g)\circ\phi.
\end{equation}

We next establish continuity at the identity in a form convenient for our arguments.

\begin{lemma}\label{lem:extension-continuity}
Let $\eta$ be a distortion function, and let $g_n\colon\R\to\C$ be normalized $\eta$-quasisymmetric embeddings.  
If $g_n\to\id_{\R}$ uniformly in the spherical metric, then the complex dilatations of $\mathcal{E}g_n$ satisfy
\[
 \mu_{\mathcal Eg_n}(z)\to 0 \qquad (n\to\infty)
\]
for every fixed $z\in\C\setminus\R$.
Moreover, this convergence is uniform on every compact subset of either open half-plane.
\end{lemma}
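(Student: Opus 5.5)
The plan is to treat the two factors of $\mathcal Eg_n|_{\Hh}=A_{g_n}\circ B_{h_n}$ separately, with $h_n=h_{g_n}=A_{g_n}^{-1}\circ g_n$. We show that both factors converge to the identity in a strong enough sense. Then $\mu_{\mathcal Eg_n}=\mu_{B_{h_n}}$ pointwise, since $A_{g_n}$ is conformal and post-composition by a conformal map does not change the complex dilatation. So the claim reduces to showing $\mu_{B_{h_n}}\to0$ locally uniformly on $\Hh$. The lower half-plane is handled identically.

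\emph{Step 1: $h_n\to\id_\R$ locally uniformly.}
\begin{itemize}
\item[(a)] Since $g_n$ is $\eta$-quasisymmetric, Tukia's theorem gives a $K(\eta)$-quasiconformal extension $G_n$ of $g_n$ to $\C$, normalized to fix $0,1,\infty$.
\item[(b)] The $h_n$ are normalized increasing quasisymmetric maps of $\R$ with distortion controlled by $\eta$; this is the standard fact that conformal welding of $K$-quasicircles is quasisymmetric with controlled constant. By Lemma~\ref{lem:normal-boundary}, every subsequence has a further subsequence with $h_n\to h_\infty$, a normalized quasisymmetric homeomorphism.
\item[(c)] By Lemma~\ref{lem:normal-qc} we may also assume $G_n\to G_\infty$ with $G_\infty|_\R=\id$, so $\Omega^+_{g_n}\to\Hh$ in the sense of kernel convergence.
\item[(d)] The maps $A_{g_n}$ extend by reflection across the quasiline to $K^2$-quasiconformal maps of $\C$ fixing $0,1,\infty$. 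Hence they subconverge uniformly on $\widehat{\C}$, and the limit is conformal on $\Hh$, maps $\Hh$ onto $\Hh$, and fixes $0,1,\infty$. So $A_{g_n}\to\id$ uniformly on $\overline{\Hh}$ spherically.
\item[(e)] Therefore $h_n=A_{g_n}^{-1}\circ g_n\to\id$. Uniqueness of the limit gives convergence of the full sequence.
\end{itemize}

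\emph{Step 2: $\mu_{B_{h_n}}\to0$ locally uniformly on $\Hh$.}
\begin{itemize}
\item[(a)] Write $B_h=u+iv$ with $u=\tfrac12(\alpha+\beta)$ and $v=\alpha-\beta$. From \eqref{eq:alphabeta} we get $\alpha(x,y)=\frac1y\int_x^{x+y}h$, so $\alpha$ is $C^1$ in $\Hh$ with
\[
\partial_x\alpha=\frac{h(x+y)-h(x)}{y},\qquad
\partial_y\alpha=\frac{h(x+y)-\alpha}{y}.
\]
The derivatives of $\beta$ have the analogous form.
\item[(b)] These are continuous functionals of $h$ restricted to the compact interval $[x-y,x+y]$, locally uniformly in $(x,y)$ on compact subsets of $\Hh$. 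So local uniform convergence $h_n\to\id$ yields $DB_{h_n}\to DB_{\id}=I$ uniformly on compact subsets of $\Hh$.
\item[(c)] Since $\mu=\partial_{\bar z}B/\partial_zB$ and $\partial_zB_{\id}=1\neq0$, it follows that $\mu_{B_{h_n}}\to0$ uniformly on compacta.
\end{itemize}

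The main obstacle is Step~1(d): controlling the normalized conformal maps $A_{g_n}$ uniformly up to the boundary, so that $A_{g_n}^{-1}\circ g_n$ really converges to $\id$ on $\R$ and not only inside $\Hh$. The first approach is the uniformly quasiconformal reflection extension described there. If that is awkward, the alternative is Carathéodory kernel convergence combined with equicontinuity of $A_{g_n}$ on $\overline{\Hh}$, which follows from the uniform bounded-turning constant of the quasilines.
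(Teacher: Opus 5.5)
Your proposal is correct and follows essentially the same route as the paper: uniformly quasiconformal extensions of the normalized Riemann maps $A_{g_n}$ force $A_{g_n}^{\pm1}\to\id$ and hence $h_n\to\id$ locally uniformly, after which the explicit derivative formulas for $\alpha_h,\beta_h$ give $DB_{h_n}\to I$ on compacta of $\Hh$ and thus $\mu_{\mathcal Eg_n}=\mu_{B_{h_n}}\to 0$. Your Steps~1(b) and~1(c) are harmless but not needed for the conclusion.
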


\begin{proof}
It suffices to prove the statement for the upper half-plane $\mathbb{H}$; the argument for the lower half-plane is completely analogous.
Write the extension as
\[
 \mathcal Eg_n=A_n\circ B_{h_n},
 \qquad h_n=A_n^{-1}\circ g_n,
\]
where $A_n=A_{g_n}$. 
Since the embeddings $g_n$ share the same distortion function $\eta$, the quasilines $\Gamma_n = g_n(\mathbb{R})$ have uniformly bounded turning.
Hence the normalized conformal maps $A_n\colon\mathbb H\to\Omega_{g_n}^+$ admit quasiconformal extensions with a uniform bound on their maximal dilatations. 
Consequently, the maps $A_n$ and $A_n^{-1}$ form normal families.

By normal-family compactness, every subsequence of $A_n$ contains a further subsequence that converges locally uniformly. 
Given that $g_n(\mathbb{R}) \to \mathbb{R}$ in the spherical metric, any subsequential limit of $A_n$ must map $\mathbb{H}$ conformally onto itself and fix $0, 1, \infty$. 
By uniqueness, this limit is therefore the identity. 
Consequently, 
\[
 A_n\to\id,
 \qquad A_n^{-1}\to\id,
 \qquad h_n\to\id,
\]
where the convergence of $A_n$ and $A_n^{-1}$ is local uniform in $\mathbb{H}$, and that of $h_n$ is uniform on compact intervals of $\mathbb{R}$.

Next, for $y>0$, differentiating the integral in \eqref{eq:alphabeta} gives
\[
 \partial_x\alpha_h=\frac{h(x+y)-h(x)}{y},\qquad\partial_y\alpha_h=\frac{h(x+y)-\alpha_h(x,y)}{y},
\]
and
\[
 \partial_x\beta_h=\frac{h(x)-h(x-y)}{y},\qquad\partial_y\beta_h=\frac{h(x-y)-\beta_h(x,y)}{y}.
\]
Thus the value and the first-order derivative of $B_h$ at a fixed point $z \in \mathbb H$
depend continuously on the restriction of $h$ to a fixed compact interval.  
It follows that
\[
 D(B_{h_n})(z)\to
 D(B_{\id})(z)=I \qquad (n\to\infty)
\]
uniformly on compact subsets of $\Hh$, where $D$ denotes the Jacobian operator and $I$ is the $2\times2$ identity matrix.  
Therefore
\[
 |\mu_{\mathcal Eg_n}(z)|
 =|\mu_{B_{h_n}}(z)|\to 0,
\]
as required.
\end{proof}

\subsection{Proof of Theorem~\ref{thm:main-intro}}

We now prove the quasiconformal extension theorem of $\mathrm{EWAS}$.

\begin{theorem}\label{thm:Esharp-AC}
Let $f\colon\R\to\C$ be a quasisymmetric $\mathrm{EWAS}$ embedding. 
Then its Tukia extension satisfies
\[
\esssup_{0<|\operatorname{Im}z|<r}
|\mu_{\mathcal Ef}(z)| \to 0 \qquad (r \to 0).
\]
\end{theorem}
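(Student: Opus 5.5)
We argue by contradiction, combining the affine covariance \eqref{eq:Tukia-naturality}, the compactness of Lemma~\ref{lem:normal-boundary}, the rigidity Theorem~\ref{thm:MV}, and the continuity at the identity in Lemma~\ref{lem:extension-continuity}. Suppose the conclusion fails. Then there are $\varepsilon_0>0$ and points $z_n=x_n+iy_n$ with $0<|y_n|\to0$ such that $|\mu_{\mathcal Ef}(z_n)|\ge\varepsilon_0$. Since the esssup is only a.e., we choose $z_n$ to be Lebesgue points of $\mu_{\mathcal Ef}$ with this property. By the symmetric construction we may assume $y_n>0$. We then rescale by the affine maps $\phi_n(x)=x_n+y_nx$ and the similarities $\psi_n(w)=(w-f(x_n))/(f(x_n+y_n)-f(x_n))$, and set
\[
 g_n=\psi_n\circ f\circ\phi_n .
\]
Each $g_n$ is a normalized $\eta$-quasisymmetric embedding with the same $\eta$ as $f$. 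By \eqref{eq:Tukia-naturality} we have $\mathcal Eg_n=\psi_n\circ\mathcal Ef\circ\phi_n$. Since $\psi_n$ is conformal and $\phi_n$ is a real affine map of $\C$ (a similarity), this gives $|\mu_{\mathcal Eg_n}(i)|=|\mu_{\mathcal Ef}(z_n)|\ge\varepsilon_0$, interpreted at Lebesgue points or via averages over small discs around $i$.

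Next we identify the limit. By Lemma~\ref{lem:normal-boundary}, a subsequence of $g_n$ converges uniformly in the spherical metric to a normalized $\eta$-quasisymmetric embedding $g_\infty$. Fix $x,s\in\R$ with $s>0$. Then
\[
 \frac{|g_n(x+s)-g_n(x)|}{|g_n(x)-g_n(x-s)|}=\frac{|f(x_n+y_nx+y_ns)-f(x_n+y_nx)|}{|f(x_n+y_nx)-f(x_n+y_nx-y_ns)|},
\]
and this ratio lies in $[H_f^{\rm eq}(2y_ns)^{-1},H_f^{\rm eq}(2y_ns)]$, which tends to $1$ by EWAS. The denominators do not degenerate because $g_n$ is uniformly quasisymmetric. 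Passing to the limit gives \eqref{eq:isosceles} for $g_\infty$, so Theorem~\ref{thm:MV} shows that $g_\infty$ is affine, and the normalization forces $g_\infty=\id_\R$.

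Lemma~\ref{lem:extension-continuity} then gives $\mu_{\mathcal Eg_n}\to0$ uniformly on a compact neighbourhood of $i$ in $\Hh$, which contradicts $|\mu_{\mathcal Eg_n}(i)|\ge\varepsilon_0$. If the pointwise value at $i$ is awkward, we use the uniform convergence on the closed disc $\overline{D}(i,1/2)$ together with the fact that $\{|\mu_{\mathcal Ef}|\ge\varepsilon_0\}$ has positive measure in the corresponding rescaled discs. Arguing with positive-measure sets rather than points sidesteps the a.e.\ issue entirely.

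The step I expect to need the most care is verifying \eqref{eq:Tukia-naturality} with the normalizations in place. The conformal map $A_g$ is normalized to fix $0,1,\infty$, so one must check that rescaling the source by $\phi_n$ and the target by $\psi_n$ changes $A$ and $h_g$ compatibly, with $h_{g_n}$ equal to the renormalization of $h_f$ by affine maps of $\R$. One must also check that $B_h$ commutes with precomposition by $x\mapsto x_n+y_nx$ and postcomposition by real affine maps. Both facts follow from the linearity of \eqref{eq:alphabeta}, and they are what the paper asserts in \eqref{eq:Tukia-naturality}, so I will take that identity as given.
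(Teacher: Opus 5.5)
Your proposal is correct and follows the paper's proof: argue by contradiction, rescale by affine maps $\phi_n,\psi_n$ to normalized $g_n$, and use compactness, $\mathrm{EWAS}$ and Theorem~\ref{thm:MV} to force $g_n\to\id_\R$. Then the affine covariance \eqref{eq:Tukia-naturality} together with Lemma~\ref{lem:extension-continuity} at a fixed point of $\Hh$ gives the contradiction.

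The differences are cosmetic. The paper normalizes at $x_n\pm y_n$ and evaluates at $(1+i)/2$, while you normalize at $x_n$ and $x_n+y_n$ and evaluate at $i$. Your Lebesgue-point precaution is harmless but unnecessary: $B_h$ is $C^1$ with positive Jacobian in $\Hh$, so $\mu_{\mathcal Ef}$ is continuous off $\R$.
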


\begin{proof}
Suppose, to the contrary, that the conclusion fails. 
After passing to one half-plane, say, to $\mathbb H$, there exist $\varepsilon_0>0$ and points
$z_n=x_n+iy_n\in\Hh$ with $y_n\to0$,
such that
\begin{equation}\label{eq:badmu}
 |\mu_{\mathcal Ef}(z_n)|\ge\varepsilon_0.
\end{equation}
Set
$\phi_n(t)=x_n-y_n+2y_nt$
and let
\[
 \psi_n(w)=
 \frac{w-f(x_n-y_n)}{f(x_n+y_n)-f(x_n-y_n)}
\]
be the similarity. 
Define
$g_n=\psi_n\circ f\circ\phi_n\colon\R\to\C$.
Then
$g_n(0)=0, g_n(1)=1, g_n(\infty)=\infty$, and all $g_n$ have the same distortion function $\eta$ as $f$.

By Lemma~\ref{lem:normal-boundary}, every subsequence contains a further
subsequence converging uniformly in the spherical metric to a normalized
quasisymmetric embedding $g$.  
Fix $u\in\R$ and $s>0$.  
The three points $\phi_n(u-s), \phi_n(u), \phi_n(u+s)$ have equal spacing $2y_ns$ and lie in an interval whose diameter tends to zero.  
The $\mathrm{EWAS}$ hypothesis for $f$ therefore gives
\[
 \frac{|g_n(u+s)-g_n(u)|}{|g_n(u-s)-g_n(u)|}\to 1 \qquad (n \to \infty).
\]
Passing to the limit yields
\[
 |g(u+s)-g(u)|=|g(u-s)-g(u)|
\]
for every $u$ and $s$.  
By Theorem~\ref{thm:MV}, $g$ is affine.  
The normalization forces $g=\id_{\R}$.  
Since every convergent subsequence has the same unique limit, we conclude that
$ g_n\to\id_{\R}$
uniformly in the spherical metric.

Let $w_0=(1+i)/2$.
Then $\phi_n(w_0)=z_n$. 
By the affine covariance \eqref{eq:Tukia-naturality},
\[
 \mathcal Eg_n
 =\psi_n\circ\mathcal Ef\circ\phi_n
\]
up to normalization.
Since $\phi_n$ and $\psi_n$ are conformally affine maps, 
it follows that $|\mu_{\mathcal Eg_n}(w_0)|=|\mu_{\mathcal Ef}(z_n)|$.
On the other hand, Combining $ g_n\to\id_{\R}$ with Lemma~\ref{lem:extension-continuity} implies that $\mu_{\mathcal Eg_n}(w_0)\to 0$, contradicting \eqref{eq:badmu}.
\end{proof}

\begin{remark}
In \cite{AndoniYang}, the asymptotic conformality of a bounded quasicircle $\Gamma$ is characterized by the asymptotic symmetry of the conformal homeomorphism of the unit disk $\mathbb D$ onto the interior domain of $\Gamma$ and its boundary extension.
\end{remark}

The following proposition shows that the asymptotic conformality yields asymptotic symmetry of the boundary map.

\begin{proposition}\label{prop:AC-boundary-AS}
Let $F\colon\C\to\C$ be a quasiconformal homeomorphism such that
\begin{equation}\label{eq:uniformAC}
 k_F(r)\coloneqq \esssup_{0<|\operatorname{Im}z|<r}|\mu_F(z)|\to 0\qquad(r\to0).
\end{equation}
Then the quasisymmetric embedding $f=F|_{\R}\colon\R\to\C$ is asymptotically symmetric.
\end{proposition}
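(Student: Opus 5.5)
We argue by contradiction with a blow-up and compactness argument, in the spirit of the proof of Theorem~\ref{thm:Esharp-AC}. Suppose $f=F|_{\R}$ is not $\mathrm{AS}$. Then there are $\varepsilon_0>0$, $t>0$ and distinct triples $a_n,b_n,x_n\in\R$ with $\delta_n\coloneqq\diam\{a_n,b_n,x_n\}\to0$ such that
\[
 \frac{|a_n-x_n|}{|b_n-x_n|}\le t \quad\text{but}\quad \frac{|f(a_n)-f(x_n)|}{|f(b_n)-f(x_n)|}>(1+\varepsilon_0)t .
\]
We rescale with $\phi_n(z)=x_n+\delta_n z$, which is a real affine map and hence preserves $\R$ and $\Hh$. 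We also set $\psi_n(w)=(w-F(x_n))/(F(x_n+\delta_n)-F(x_n))$. Then $G_n=\psi_n\circ F\circ\phi_n$ is a $K$-quasiconformal self-map of $\C$ fixing $0,1,\infty$, with $K$ the dilatation of $F$. The rescaled points $a_n'=(a_n-x_n)/\delta_n$ and $b_n'=(b_n-x_n)/\delta_n$ lie in $[-1,1]$, and $x_n'=0$. Since $|a_n'-b_n'|$, $|a_n'|$ or $|b_n'|$ equals $1$, we may pass to subsequences with $a_n'\to a$ and $b_n'\to b$. The ratio condition and $\diam=1$ force $b\neq 0$.

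The key step is the dilatation estimate. Because $\phi_n$ and $\psi_n$ are conformal affine maps,
\[
 |\mu_{G_n}(z)|=|\mu_F(\phi_n(z))| .
\]
If $E\subset\C$ is compact with $E\subset\{|\operatorname{Im} z|<R\}$, then $\phi_n(E)\subset\{|\operatorname{Im} w|<R\delta_n\}$. So off the real line (a null set) we get $\esssup_E|\mu_{G_n}|\le k_F(R\delta_n)\to0$ by \eqref{eq:uniformAC}. Lemma~\ref{lem:normal-qc} then gives a subsequence with $G_n\to\id$ uniformly in the spherical metric, hence locally uniformly on $\C$.

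To conclude, $G_n(a_n')\to a$, $G_n(b_n')\to b$ and $G_n(0)=0$. Since $b\neq0$ and the ratio is similarity invariant,
\[
 \frac{|f(a_n)-f(x_n)|}{|f(b_n)-f(x_n)|}=\frac{|G_n(a_n')|}{|G_n(b_n')|}\to\frac{|a|}{|b|}=\lim_n\frac{|a_n'|}{|b_n'|}\le t .
\]
This contradicts the lower bound $(1+\varepsilon_0)t$.

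I expect the main obstacle to be the degenerate configurations, specifically making sure the denominator limit $b$ is nonzero. If $|b_n'|\to0$, then $|a_n'|\le t|b_n'|\to0$ as well, which contradicts $\diam\{a_n',b_n',0\}=1$. So the argument closes. Apart from this, one only needs the routine remark that $\mu_F$ is defined a.e. and $\R$ has measure zero, so the supremum over $\{0<|\operatorname{Im}z|<r\}$ controls the essential supremum on whole compact sets.
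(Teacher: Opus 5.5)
Your proof is correct and follows essentially the same argument as the paper: rescale by conformal affine maps, observe that $\esssup_E|\mu_{G_n}|\to 0$ on every compact set, apply Lemma~\ref{lem:normal-qc} to get $G_n\to\id$, and pass to the limit in the distance ratio. The only difference is the choice of normalization. The paper rescales by $s_n=b_n-x_n$, so that $b_n\mapsto 1$ and the denominator is identically $1$. You rescale by the diameter $\delta_n$, which requires the extra nondegeneracy check $b\neq 0$, and you handle that check correctly.
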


\begin{proof}
Suppose to the contrary that $f$ is not asymptotically symmetric. Then there exist $\varepsilon_0>0$, $t>0$, and triples $a_n,b_n,x_n\in\R$ whose diameters tend to zero such that
\begin{equation}\label{eq:AS-failure-domain}
 |a_n-x_n|\leq t|b_n-x_n|
\end{equation}
and
\begin{equation}\label{eq:AS-failure-image}
 |f(a_n)-f(x_n)|>(1+\varepsilon_0)t|f(b_n)-f(x_n)|.
\end{equation}
Set $s_n=b_n-x_n\ne0$, and define the conformal affine maps
\[
\alpha_n(z)=x_n+s_nz, \qquad\beta_n(w)= \frac{w-f(x_n)}{f(b_n)-f(x_n)}.
\]
Note that $\alpha_n$ and $\beta_n$ remain conformal affine even when $s_n<0$.  
Consider the normalized sequence
\begin{equation}\label{eq:Gn}
 G_n=\beta_n\circ F\circ\alpha_n.
\end{equation}
Then $G_n(0)=0, G_n(1)=1,G_n(\infty)=\infty$, and all $G_n$ have the maximal dilatation as $F$.

Since $|\operatorname{Im}\alpha_n(z)|=|s_n|\,|\operatorname{Im}z|$ and $|s_n|\to0$, condition \eqref{eq:uniformAC} gives $\esssup_{z\in E}|\mu_{G_n}(z)|\to 0$ as $n \to \infty$ for every compact set $E\subset\C$.
By Lemma~\ref{lem:normal-qc}, every subsequential limit of $G_n$ is a conformal self-homeomorphism of $\C$ fixing $0,1,\infty$.  
Therefore,
$G_n\to \id_{\C}$
uniformly in the spherical metric.

Setting
\[
 u_n=\frac{a_n-x_n}{b_n-x_n}.
\]
we have $|u_n|\leq t$ by \eqref{eq:AS-failure-domain}. 
Passing to a subsequence if necessary, let $u_n\to u$ with $|u|\leq t$.  
It follows from $G_n\to \id_{\C}$ that $G_n(u_n)\to u$.

On the other hand, by \eqref{eq:AS-failure-image} and the definition of $G_n$ in \eqref{eq:Gn},
\[
 |G_n(u_n)|=\frac{|f(a_n)-f(x_n)|}{|f(b_n)-f(x_n)|}>(1+\varepsilon_0)t,
\]
which implies in the limit that $|u| \geq (1+\varepsilon_0)t > t$. This directly contradicts $|u|\leq t$, completing the proof.
\end{proof}

Combining the extension theorem and the boundary proposition gives the main
result.

\begin{proof}[Proof of Theorem~\ref{thm:main-intro}]
Only (iii)$\Rightarrow$(i) requires proof.  
If $f$ is $\mathrm{EWAS}$, then Theorem~\ref{thm:Esharp-AC} provides a quasiconformal extension satisfying
\eqref{eq:uniformAC}.  
Proposition~\ref{prop:AC-boundary-AS} implies that $f$ is $\mathrm{AS}$.
\end{proof}

\subsection{An alternative proof}
While the quasiconformal extension perspective is insightful, Theorem~\ref{thm:main-intro} also admits a short, direct proof. 

Suppose that $f$ is $\mathrm{EWAS}$ but not $\mathrm{AS}$.  
Then there exist $\varepsilon_0>0$, $t>0$, and triples $a_n,b_n,x_n\in\mathbb R$ such that
\[
|a_n-x_n|\leq t|b_n-x_n|,\qquad\operatorname{diam}\{a_n,b_n,x_n\}\to 0,
\]
but
\[
|f(a_n)-f(x_n)|
>(1+\varepsilon_0)t|f(b_n)-f(x_n)|.
\]
Set $s_n=b_n-x_n$ and define the normalized quasisymmetric embeddings
\[
g_n(u)=\frac{f(x_n+s_nu)-f(x_n)}{f(x_n+s_n)-f(x_n)}.
\]
By quasisymmetric compactness, after passing to a subsequence if necessary, $g_n$ converges locally uniformly on $\mathbb R$ to a normalized quasisymmetric embedding $g$.

We claim that $g$ satisfies condition \eqref{eq:isosceles} in Theorem \ref{thm:MV}.  
Indeed, for any $u,s\in\mathbb R$, the three points
\[
x_n+s_n(u+s),\qquad x_n+s_nu,\qquad x_n+s_n(u-s)
\]
form a symmetric triple, whose diameter is $2|s_n||s|$, which tends to $0$ as $n \to \infty$.  
The $\mathrm{EWAS}$ condition therefore gives
\[
\frac{|g_n(u+s)-g_n(u)|}{|g_n(u-s)-g_n(u)|}\to 1.
\]
Passing to the limit, we obtain
\[
|g(u+s)-g(u)|=|g(u-s)-g(u)|\qquad(u,s\in\mathbb R).
\]
By Theorem~\ref{thm:MV}, $g$ is affine.  
The normalization gives $g=\operatorname{id}_{\mathbb R}$.

Hence $g_n(u_n)\to u$, where $u_n=(a_n-x_n)/(b_n-x_n)\to u$ and $|u|\leq t$, contradicting the same image ratio as in the proof of Proposition~\ref{prop:AC-boundary-AS}.

This argument shows that the McKemie--Vaaler rigidity theorem alone
suffices for the boundary conclusion. The quasiconformal extension approach, on the other hand, provides additional geometric information, including the corresponding uniformly asymptotically conformal extension under the normalization.

\section{Proof of the other theorems and WAS--AS problem}\label{sec:proof_of_theorems_ref_thm_onto_line_intro}

In this section, we prove Theorems~\ref{thm:onto-line-intro}, \ref{thm:symmetric-source-intro}, and \ref{thm:symmetric-target-intro}.  
The arguments are given by generalizing the alternative proof of Theorem~\ref{thm:main-intro}
relying on Theorem \ref{thm:MV}.
We then distinguish between the noncompact and compact settings and explain the additional assumption that arises in the noncompact case. In the compact setting, these results turn out to be an affirmative answer to a special case of the problem by Brania and Yang.

\subsection{Proof of Theorem~\ref{thm:onto-line-intro}}

The proof uses quasiconformal compactness and the McKemie--Vaaler rigidity theorem. 
After normalization, $\mathrm{EWAS}$ passes to the limit and forces the limiting inverse to be affine.

\medskip

Assume that $h$ is not $\mathrm{AS}$. 
There are $\varepsilon_0>0$, $t>0$, and triples
$a_n,b_n,x_n\in\Gamma$ such that
\begin{equation}\label{eq:onto-failure-scale}
 |a_n-x_n|\leq t|b_n-x_n|, \qquad \diam\{a_n,b_n,x_n\}\to 0,
\end{equation}
and
\begin{equation}\label{eq:onto-failure-image}
 |h(a_n)-h(x_n)|>(1+\varepsilon_0)t|h(b_n)-h(x_n)|.
\end{equation}
Put $r_n=|b_n-x_n|\to0$ and define similarities
\[
 \Psi_n(z)=\frac{z-x_n}{b_n-x_n},
 \qquad
 \Phi_n(w)=\frac{w-h(x_n)}{h(b_n)-h(x_n)}.
\]
Choose a quasiconformal extension $H\colon\C\to\C$ of $h$, and set
\[
 H_n=\Phi_n\circ H\circ \Psi_n^{-1},\qquad\Gamma_n=\Psi_n(\Gamma).
\]
Then $H_n$ fixes $0,1,\infty$, maps $\Gamma_n$ onto $\R$, and has the same maximal dilatation as $H$ for every $n$. 
Passing to a subsequence, by Lemma~\ref{lem:normal-qc}, we may assume $H_n\to H_\infty$, and $H_n^{-1}\to H_\infty^{-1}$ 
uniformly in the spherical metric. 
Let $\Gamma_\infty=H_\infty^{-1}(\R)$, and put
\[
 p_n=H_n^{-1}|_{\R},\qquad  p_\infty=H_\infty^{-1}|_{\R}.
\]
Thus $p_n\to p_\infty$ locally uniformly on $\R$.

We claim that $p_\infty$ preserves every symmetric  triple.  
Fix $u\in\R$ and $s>0$, and write
\[
 d_n^+=|p_n(u+s)-p_n(u)|,
 \qquad
 d_n^-=|p_n(u-s)-p_n(u)|.
\]
Suppose, for contradiction, that along a subsequence
\[
 d_n^+\to  d^+,\qquad d_n^-\to  d^-,\qquad d^+>d^-.
\]
Then, for all sufficiently large $n$, $d_n^+>d_n^-$.  
The continuous function
\[
 v\longmapsto |p_n(u+v)-p_n(u)|,\qquad 0\leq v\leq s,
\]
takes the values $0$ and $d_n^+$ at the endpoints.  
Hence there is $v_n\in(0,s)$ such that
\begin{equation}\label{eq:inverse-equidistant}
 |p_n(u+v_n)-p_n(u)|=d_n^-=|p_n(u-s)-p_n(u)|.
\end{equation}
The three points in \eqref{eq:inverse-equidistant} remain in the fixed interval $[u-s,u+s]$.  
After applying $\Psi_n^{-1}$, their diameter is $O(r_n)$ and tends to $0$ as $n \to \infty$.  
The $\mathrm{EWAS}$ condition for $h$, applied to this symmetric  triple, gives
\[
\frac{v_n}{s}= \frac{|(u+v_n)-u|}{|(u-s)-u|}\to 1.
\]
Thus $v_n\to s$.  
Local uniform convergence of $p_n$ then implies
\[
 d^-=\lim_{n \to \infty}|p_n(u+v_n)-p_n(u)|=|p_\infty(u+s)-p_\infty(u)|=d^+,
\]
a contradiction.  
Swapping the roles of the two sides gives
\begin{equation*}\label{eq:pinfty-isosceles}
 |p_\infty(u+s)-p_\infty(u)|
 =|p_\infty(u-s)-p_\infty(u)|
\end{equation*}
for every $u$ and $s$.

By Theorem~\ref{thm:MV}, $p_\infty$ is affine.  
Since it fixes $0$ and $1$, we have $p_\infty=\id_{\R}$.  
Consequently,
\[
 \Gamma_\infty=\R,\qquad H_\infty|_{\R}=\id_{\R}.
\]
Finally, set
\[
 u_n=\Psi_n(a_n)=\frac{a_n-x_n}{b_n-x_n}.
\]
By \eqref{eq:onto-failure-scale}, $|u_n|\leq t$.  
Passing to a further subsequence, $u_n\to u\in\R$ with $|u|\leq t$.  
Therefore,
\[
 H_n(u_n)\to  H_\infty(u)=u.
\]
On the other hand, \eqref{eq:onto-failure-image} says
\[
 |H_n(u_n)|=\frac{|h(a_n)-h(x_n)|}{|h(b_n)-h(x_n)|}>(1+\varepsilon_0)t,
\]
which is impossible.  
Thus $h$ is $\mathrm{AS}$.

\subsection{Proof of Theorem~\ref{thm:symmetric-source-intro}}

Theorem~\ref{thm:symmetric-source-intro} extends Theorem~\ref{thm:main-intro} from the real line $\mathbb{R}$ to arbitrary asymptotically conformal quasilines.  
The argument uses the asymptotic conformality of the quasiline $\Gamma_1$ to identify the limiting geometry.

\medskip

Suppose that $\mathrm{AS}$ fails.  
Choose $\varepsilon_0>0$, $t>0$, and triples $a_n,b_n,x_n\in\Gamma_1$ such that
\begin{equation*}
 |a_n-x_n|\leq t|b_n-x_n|,\qquad \diam\{a_n,b_n,x_n\}\to0,
\end{equation*}
and
\begin{equation}\label{eq:sym-source-failure-image}
 |h(a_n)-h(x_n)|>(1+\varepsilon_0)t|h(b_n)-h(x_n)|.
\end{equation}
Define similarities as before
\[
 \Psi_n(z)=\frac{z-x_n}{b_n-x_n},
 \qquad
 \Phi_n(w)=\frac{w-h(x_n)}{h(b_n)-h(x_n)}.
\]
Fix a quasiconformal extension $H$ of $h$ and set
\[
 H_n=\Phi_n\circ H\circ \Psi_n^{-1},
 \quad
 \Gamma_{1,n}=\Psi_n(\Gamma_1),
 \quad
 \Gamma_{2,n}=\Phi_n(\Gamma_2).
\]
After passing to a subsequence, $H_n\to H_\infty$ uniformly in the spherical metric.  
Let
\[
 \Gamma_{j,\infty}=\lim_{n \to \infty}\Gamma_{j,n} \quad (j=1,2),\qquad  h_\infty=H_\infty|_{\Gamma_{1,\infty}}.
\]
By Lemma~\ref{lem:symmetric-blowup}, $\Gamma_{1,\infty}=\R$.
The normalized quasiconformal parametrizations used in taking the limit also preserve the order of the two components of $\Gamma_{1,n}\setminus\{c_n\}$; hence the ordered-convergence hypothesis in
Lemma~\ref{lem:equidistant-approx} is available below.

We show that $h_\infty\colon\R\to\C$ preserves all symmetric  triples.  
Fix $x\in\R$ and $s>0$.  
Choose $c_n\in\Gamma_{1,n}$ with $c_n\to x$ and use Lemma~\ref{lem:equidistant-approx} to find $a_n',b_n'\in\Gamma_{1,n}$ such that
\[
 a_n'\to x+s,\qquad b_n'\to x-s,\qquad |a_n'-c_n|=|b_n'-c_n|.
\]
These normalized points stay bounded. Moreover, their preimages under $\Psi_n^{-1}$ have diameter $O(|b_n-x_n|)\to0$
as $n \to \infty$.  
Applying $\mathrm{EWAS}$ to the symmetric  triple and passing to the limit yields
\[
 |h_\infty(x+s)-h_\infty(x)|=|h_\infty(x-s)-h_\infty(x)|.
\]
Theorem~\ref{thm:MV} now implies that $h_\infty|_{\mathbb{R}}$ is affine. 
Since $H_n(0)=0$, $H_n(1)=1$, and $H_n(\infty)=\infty$, the normalization gives $h_\infty|_{\mathbb{R}}=\id_{\R}$.
In particular, $\Gamma_{2,\infty}=\R$.

Set $u_n=\Psi_n(a_n)$.
Then $|u_n|\leq t$.  
Passing to a subsequence, $u_n\to u\in\R$ with $|u|\leq t$.  
Hence
\[
 H_n(u_n)\to  H_\infty(u)=u.
\]
But \eqref{eq:sym-source-failure-image} gives $|H_n(u_n)|>(1+\varepsilon_0)t$,
a contradiction.  
Therefore $h$ is $\mathrm{AS}$.

\begin{remark}
Theorem~\ref{thm:symmetric-source-intro} gives a noncompact positive result for the $\mathrm{WAS}$--$\mathrm{AS}$ problem under asymptotic conformality of the quasiline.
Theorem~\ref{thm:onto-line-intro} is complementary: no asymptotic conformality assumption on the quasiline is needed when the image is $\mathbb{R}$.
\end{remark}

\subsection{Proof of Theorem~\ref{thm:symmetric-target-intro}}

Theorem~\ref{thm:symmetric-source-intro} assumes asymptotic conformality of $\Gamma_1$.  
It is natural to ask whether the formally dual statement, with asymptotic conformality imposed only on $\Gamma_2$, is also true.  
The noncompact setting makes this direction more delicate.

To explain the point, suppose that $h\colon \Gamma_1 \to \Gamma_2$ is a quasisymmetric homeomorphism between quasilines and that $\mathrm{AS}$ fails.
As in the proof of Theorem~\ref{thm:symmetric-source-intro}, we may choose $\varepsilon_0>0$, $t>0$, and triples $a_n,b_n,x_n\in\Gamma_1$ such that
\[
 |a_n-x_n|\leq t|b_n-x_n|,\qquad \diam\{a_n,b_n,x_n\}\to 0,
\]
whereas
\[
 |h(a_n)-h(x_n)| >(1+\varepsilon_0)t\,|h(b_n)-h(x_n)|.
\]

The normalization is made at the scale $r_n=|b_n-x_n|\to0$.  
When $\Gamma_1$ is asymptotically conformal, the normalized curves converge to a straight line, which is the key step in the proof of Theorem~\ref{thm:symmetric-source-intro}.

If instead only $\Gamma_2$ is assumed to be asymptotically conformal, the
corresponding scale is
\[
 \rho_n=|h(b_n)-h(x_n)|.
\]
To apply the infinitesimal flatness of $\Gamma_2$, one needs $\rho_n\to 0$.

For a quasisymmetric homeomorphism  between noncompact quasilines, however, the implication
\[
 |b_n-x_n|\to 0\quad\implies\quad|h(b_n)-h(x_n)|\to 0
\]
is not automatic.  
In particular, global quasisymmetry does not imply uniform continuity in the noncompact setting.  
Thus the argument with asymptotic conformality of $\Gamma_2$ does not follow immediately from that of Theorem~\ref{thm:symmetric-source-intro}.

The real line is exceptional in this setting.  
In Theorem~\ref{thm:onto-line-intro}, no smallness assumption on the image scale is needed, since every affine normalization of $\R$ is again $\R$.  
For a general asymptotically conformal quasiline, however, the corresponding normalized curves approach a straight line only when the normalization scale tends to zero.  
Thus, in this case, one needs $\rho_n\to0$.

The obstruction disappears under the additional assumption of uniform continuity.

\begin{proof}[Proof of Theorem~\ref{thm:symmetric-target-intro}]
We briefly indicate the proof, which combines the arguments from Theorems~\ref{thm:onto-line-intro} and \ref{thm:symmetric-source-intro}.

Suppose that $\mathrm{AS}$ fails.  Choose $\varepsilon_0>0$, $t>0$, and triples $a_n,b_n,x_n\in\Gamma_1$ such that
\begin{equation}\label{eq:sym-target-failure}
 |a_n-x_n|\leq t|b_n-x_n|,\qquad \diam\{a_n,b_n,x_n\}\to0,
\end{equation}
and
\begin{equation}\label{eq:sym-target-image-failure}
 |h(a_n)-h(x_n)|>(1+\varepsilon_0)t\,|h(b_n)-h(x_n)|.
\end{equation}
Set $r_n=|b_n-x_n|$ and $\rho_n=|h(b_n)-h(x_n)|$.
Then $r_n\to0$, and uniform continuity of $h$ gives $\rho_n\to 0$.

Define the similarities $\Psi_n(z)$ and $\Phi_n(w)$ as in the proof of
Theorem~\ref{thm:symmetric-source-intro}.

Fix a quasiconformal extension $H\colon\C\to\C $ of $h$ with $H(\infty)=\infty$ and put $H_n=\Phi_n\circ H\circ \Psi_n^{-1}$.
After passing to a subsequence, we may assume that $H_n\to H_\infty$ and $H_n^{-1}\to H_\infty^{-1}$ uniformly in the spherical metric. 
The normalized curves $\Gamma_{1,n}=\Psi_n(\Gamma_1)$ and $\Gamma_{2,n}=\Phi_n(\Gamma_2)$ converge to quasilines $\Gamma_{1,\infty}$ and $\Gamma_{2,\infty}$, respectively.

By Lemma~\ref{lem:symmetric-blowup}, applied to $\Gamma_2$ with the scale $\rho_n\to0$, we obtain $\Gamma_{2,\infty}=\R$.
Consequently, the limiting boundary map $h_\infty$ maps $\Gamma_{1,\infty}$ onto $\mathbb R$.

We claim that its inverse
\[
 p_\infty=h_\infty^{-1}\colon\R\to\Gamma_{1,\infty}
\]
preserves all symmetric  triples.  
This requires a slight modification of the argument in the proof of Theorem~\ref{thm:onto-line-intro}, because the curves $\Gamma_{2,n}$ are not equal to $\R$, but only converge to $\R$.

Put
\[
 p_n=H_n^{-1}|_{\Gamma_{2,n}}\colon\Gamma_{2,n}\to\Gamma_{1,n}.
\]
Fix $u\in\R$ and $s>0$.  
Using the normalized parametrizations of $\Gamma_{2,n}$ and preserving the order of the two components, choose points $c_n,q_n^+,q_n^-\in\Gamma_{2,n}$ such that
\[
 c_n\to u,\qquad q_n^+\to u+s,\qquad q_n^-\to u-s,
\]
where $q_n^+$ and $q_n^-$ lie on the two different components of $\Gamma_{2,n}\setminus\{c_n\}$.  
By the convergence of $H_n^{-1}$,
\[
 d_n^\pm\coloneqq  |p_n(q_n^\pm)-p_n(c_n)|\to  d^\pm \coloneqq  |p_\infty(u\pm s)-p_\infty(u)|.
\]

Suppose, for contradiction, that $d^+>d^-$.  
Then $d_n^+>d_n^-$ for all sufficiently large $n$.  
Along the compact subarc of $\Gamma_{2,n}$ joining $c_n$ to $q_n^+$, the function
\[
 q\longmapsto |p_n(q)-p_n(c_n)|
\]
is continuous, takes the value $0$ at $c_n$, and the value $d_n^+$ at $q_n^+$.  
Hence there exists a point $q_n'$ on this subarc such that
\begin{equation}\label{eq:target-intermediate}
 |p_n(q_n')-p_n(c_n)|=|p_n(q_n^-)-p_n(c_n)|.
\end{equation}
The points involved stay in bounded parts of the normalized curves.
After applying $A_n^{-1}$, the three points in \eqref{eq:target-intermediate} therefore have diameter $O(r_n)\to0$
as $n \to \infty$.  
Since they form a symmetric  triple in the domain,
the $\mathrm{EWAS}$ condition for $h$ gives
\begin{equation*}
 \frac{|q_n'-c_n|}{|q_n^--c_n|}\to 1.
\end{equation*}

Since $c_n\to u$ and $q_n^-\to u-s$, we have $|q_n^--c_n|\to s$, and hence
$|q_n'-c_n|\to s$.
Moreover, $q_n'$ lies on the component of $\Gamma_{2,n}\setminus\{c_n\}$ converging to the positive half-line.
The ordered convergence $\Gamma_{2,n}\to\R$ therefore implies $q_n'\to  u+s$.
Using the convergence of $H_n^{-1}$ again and \eqref{eq:target-intermediate}, we obtain
\[
 d^+=|p_\infty(u+s)-p_\infty(u)|=\lim_{n\to\infty}|p_n(q_n')-p_n(c_n)|=d^-,
\]
a contradiction.  
Interchanging the two sides gives
\[
 |p_\infty(u+s)-p_\infty(u)|=|p_\infty(u-s)-p_\infty(u)|\qquad (u\in\R,\ s>0).
\]

By Theorem \ref{thm:MV}, $p_\infty$ is affine. 
The normalization at $0$, $1$, and $\infty$ then yields $\Gamma_{1,\infty}=\R$ and $h_\infty=\id_{\R}$.

Finally, set $u_n=\Psi_n(a_n)$.
By \eqref{eq:sym-target-failure}, we have $|u_n|\leq t$.
Passing to a further subsequence, $u_n\to u\in\R$ with $|u|\leq t$.
Hence $H_n(u_n)\to H_\infty(u)=u$.
On the other hand, \eqref{eq:sym-target-image-failure} gives
\[
 |H_n(u_n)|=\frac{|h(a_n)-h(x_n)|}{|h(b_n)-h(x_n)|}>(1+\varepsilon_0)t,
\]
which is  a contradiction.  
Thus $h$ is $\mathrm{AS}$.
\end{proof}

\subsection{The compact setting}

The situation becomes symmetric again in the compact case, since uniform continuity is then automatic.
Although an argument by passing to the universal covering spaces may derive claims for quasicircle cases from quasiline cases, below in the proof, we record how this follows from our previous arguments. 
We only need the following minor modifications.
For the normalization of mappings on $\mathbb S$, we assume that three points $1$, $i$, and $-i$ are fixed. 
To apply Theorem \ref{thm:MV}, we use condition \eqref{eq:isosceles-circle} and assert that a weakly $1$-quasisymmetric embedding of $\mathbb S$ is circular.

\begin{theorem}\label{thm:compact-one-symmetric}
Let $\Gamma_1$ and $\Gamma_2$ be bounded quasicircles in $\C$, and let $h\colon\Gamma_1\to\Gamma_2$ be a quasisymmetric homeomorphism. 
Assume that at least one of $\Gamma_1$ and $\Gamma_2$ is an asymptotically conformal quasicircle.  
If $h$ is $\mathrm{EWAS}$, then $h$ is $\mathrm{AS}$.  
Consequently, the conditions $\mathrm{EWAS}$, $\mathrm{WAS}$, and $\mathrm{AS}$ are equivalent in this case.
\end{theorem}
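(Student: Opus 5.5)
We argue by contradiction, mimicking the proofs of Theorems~\ref{thm:symmetric-source-intro} and~\ref{thm:symmetric-target-intro}, and we treat the two cases separately. The compact setting supplies the uniform continuity that Theorem~\ref{thm:symmetric-target-intro} had to assume. Suppose $\mathrm{AS}$ fails. Then there are $\varepsilon_0>0$, $t>0$ and triples $a_n,b_n,x_n\in\Gamma_1$ with $|a_n-x_n|\le t|b_n-x_n|$, $\diam\{a_n,b_n,x_n\}\to0$, and $|h(a_n)-h(x_n)|>(1+\varepsilon_0)t|h(b_n)-h(x_n)|$. Since $h$ is a homeomorphism of compact sets, it is uniformly continuous, so $\rho_n=|h(b_n)-h(x_n)|\to0$ as well as $r_n=|b_n-x_n|\to0$. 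We extend $h$ to a quasiconformal map $H\colon\C\to\C$ with $H(\infty)=\infty$; this is possible since a quasisymmetric map between quasicircles extends quasiconformally. We then form $H_n=\Phi_n\circ H\circ\Psi_n^{-1}$ with the same similarities $\Psi_n,\Phi_n$ as before.

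Each $H_n$ fixes $0$, $1$ and $\infty$, so Lemma~\ref{lem:normal-qc} gives a subsequence with $H_n\to H_\infty$ and $H_n^{-1}\to H_\infty^{-1}$ uniformly in the spherical metric. The blown-up curves $\Gamma_{j,n}$ are bounded quasicircles of diameter about $\diam\Gamma_j/r_n\to\infty$ (respectively $\diam\Gamma_j/\rho_n$). By the bounded-turning condition, the arcs of $\Gamma_{j,n}$ near the origin converge locally to a quasiline $\Gamma_{j,\infty}$. Concretely, the short subarc through $x_n$ (respectively $h(x_n)$) of diameter comparable to a fixed multiple of the scale is the relevant piece, and the complementary long arc escapes to $\infty$. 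One must check that Lemma~\ref{lem:symmetric-blowup} still applies in this local form. Its proof only uses the $\omega_\Gamma$ estimate on subarcs of small diameter, and for small $|a-b|$ the smaller arc $\Gamma[a,b]$ is the one to use, so the lemma goes through. Lemma~\ref{lem:equidistant-approx} is likewise purely local, with $\Gamma_n\setminus\{c_n\}$ replaced by the two arcs leaving $c_n$ within a large ball.

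\emph{Case $\Gamma_1$ asymptotically conformal.} Lemma~\ref{lem:symmetric-blowup} gives $\Gamma_{1,\infty}=\R$. For $x\in\R$ and $s>0$, Lemma~\ref{lem:equidistant-approx} supplies symmetric triples $b_n',c_n,a_n'\in\Gamma_{1,n}$ converging to $x-s$, $x$, $x+s$. Their preimages under $\Psi_n$ have diameter $O(r_n)$, so $\mathrm{EWAS}$ passes to the limit and $h_\infty=H_\infty|_{\R}$ preserves symmetric triples. \emph{Case $\Gamma_2$ asymptotically conformal.} Lemma~\ref{lem:symmetric-blowup} at scale $\rho_n\to0$ gives $\Gamma_{2,\infty}=\R$. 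The intermediate-value argument in the proof of Theorem~\ref{thm:symmetric-target-intro} then shows that $p_\infty=H_\infty^{-1}|_{\R}$ preserves symmetric triples. In either case Theorem~\ref{thm:MV} yields an affine (hence identity) limit on $\R$. The circular alternative is excluded because the limit is a quasiline through $\infty$, not a circle. Consequently both limit curves equal $\R$ and $H_\infty|_{\R}=\id_{\R}$. With $u_n=\Psi_n(a_n)\to u$ and $|u|\le t$, we get $H_n(u_n)\to u$, which contradicts $|H_n(u_n)|>(1+\varepsilon_0)t$.

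The main obstacle is the passage from bounded curves to their blow-up limits. We must make precise in what sense $\Gamma_{j,n}$ converges to a quasiline through $\infty$, and how the order on the two arcs from $c_n$ is preserved. Both are needed for Lemma~\ref{lem:equidistant-approx} and for the intermediate-value step. My first approach would be to parametrize each $\Gamma_{j,n}$ by a normalized quasisymmetric map of $\mathbb S$ fixing $1$, $i$ and $-i$, suitably chosen, and then precompose with a Möbius map sending $\mathbb S$ to $\R\cup\{\infty\}$. The normalized maps form an equicontinuous quasisymmetric family on $\widehat\C$, so Lemma~\ref{lem:normal-boundary} applies and the limit passes through $\infty$ because the diameters blow up. If that bookkeeping becomes heavy, the alternative is to lift $h$ to an equivariant map between the universal-cover quasilines, but that would require checking that asymptotic conformality and $\mathrm{EWAS}$ survive the lift.
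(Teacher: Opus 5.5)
Your proposal is correct and follows essentially the paper's own route. You argue by contradiction with the similarity blow-ups $H_n=\Phi_n\circ H\circ\Psi_n^{-1}$, flatten the asymptotically conformal curve via the smaller-subarc version of Lemma~\ref{lem:symmetric-blowup}, get $\rho_n\to0$ from compactness and run the inverse-map intermediate-value step in the $\Gamma_2$ case, and finish with Theorem~\ref{thm:MV} and the three-point contradiction.

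Two small remarks. Your reading of the blow-up limit as a quasiline through $\infty$, which excludes the circular alternative in Theorem~\ref{thm:MV}, is actually cleaner than the paper's sketch, which speaks of the normalized curves converging to ``a circle''. In your bookkeeping, however, parametrizations of $\Gamma_{j,n}$ cannot literally fix $1,i,-i$, and Lemma~\ref{lem:normal-boundary} needs $g_n(\infty)=\infty$. Instead, take $K$-quasiconformal maps $G_n$ of $\widehat{\C}$ with $G_n(0)=0$, $G_n(1)=1$ and $G_n(\infty)=\zeta_n\in\Gamma_{j,n}$, where $\zeta_n\to\infty$, and use compactness of such maps.
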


\begin{proof}
Suppose first that $\Gamma_1$ is asymptotically conformal.  
The argument in the proof of Theorem~\ref{thm:symmetric-source-intro} applies with a little modification mentioned above.
Indeed, after normalizing a sequence of symmetric triples in $\Gamma_1$ whose diameters tend to zero, the corresponding normalized curves converge to a circle. This is due to 
the circle analogue of Lemma \ref{lem:symmetric-blowup}, which follows by the same argument, using the smaller-diameter subarc. 
The limiting map is therefore a weakly $1$-quasisymmetric embedding of $\mathbb S$ into $\C$, and Theorem \ref{thm:MV} forces it to be circular.  
The normalized three-point contradiction then proves $\mathrm{AS}$.

Suppose next that $\Gamma_2$ is asymptotically conformal.  
Since $\Gamma_1$ is compact, the homeomorphism $h\colon\Gamma_1\to\Gamma_2$ is uniformly continuous.  
Hence, for every sequence $b_n,x_n\in\Gamma_1$ with $|b_n-x_n|\to 0$, we have $|h(b_n)-h(x_n)|\to 0$.
Thus the normalization scale $\rho_n=|h(b_n)-h(x_n)|$ tends to zero.  
The proof of Theorem~\ref{thm:symmetric-target-intro} therefore applies. 
The normalized curves converge to a circle, and the limiting map $h_\infty\colon\Gamma_{1,\infty}\to\mathbb S$ is treated by the inverse-map argument from the proof of Theorem~\ref{thm:onto-line-intro}.  
Theorem~\ref{thm:MV} then identifies the normalized limit with the identity, contradicting the assumed failure of $\mathrm{AS}$.

Thus $\mathrm{EWAS}$ implies $\mathrm{AS}$ whenever either $\Gamma_1$ or $\Gamma_2$ is asymptotically conformal. 
Since $\mathrm{AS}\implies\mathrm{WAS}\implies\mathrm{EWAS}$, the final assertion follows.
\end{proof}

Brania and Yang \cite[p.2678]{BraniaYang} raised the problem of whether a weakly asymptotically symmetric embedding of a bounded Jordan curve into the plane is asymptotically symmetric, and also a particular case that they consider tractable.
Theorem \ref{thm:compact-one-symmetric} gives an affirmative answer to the case explicitly suggested by them, namely quasisymmetric homeomorphisms between asymptotically conformal quasicircles.

\section{A proof by modulus of curve families}\label{sec:a_proof_by_modulus_of_curve_families}

We now explain how the modulus argument in \cite[Theorem~3.1]{BraniaYang} can be adapted to the present setting.  
This section is not needed for the proof above, but it clarifies the connection with the original 
argument for $\mathrm{WAS}$--$\mathrm{AS}$ problem.

For a curve family $\Gamma$ in $\C$, we consider its modulus $\Mod \Gamma$.
For disjoint continua $E$ and $F$ in $\C$, let $\Delta(E,F;\C)$ denote the curve family joining $E$ and $F$ in $\C$.
Let $\lambda$ denote the Teichm\"uller function: if $E=[-1,0]$ and $F=[s,\infty]$, $s>0$, then $\lambda(s)=\Mod\Delta(E,F;\C)$.  
The function $\lambda$ is continuous and strictly decreasing, with $\lambda(0)=\infty$ and $\lambda(\infty)=0$.
We also use the following comparison principle: if $a,b\in E$, and $c,d\in F$, then
\begin{equation}\label{eq:Teich-comparison}
 \Mod\Delta(E,F;\C)\ge\lambda([a,b,c,d]),
\end{equation}
where $[a,b,c,d]$ is the absolute value of the cross ratio:
\[
 [a,b,c,d] =\frac{|b-c|\,|a-d|}{|a-b|\,|c-d|}.
\]
These are precisely the modulus facts recalled before the proof of \cite[Theorem~3.1]{BraniaYang}.

Let $\gamma\colon X\to\Gamma$ be a parametrization of a quasiline by a metric space $(X,d_X)$. 
We say that $\Gamma$ is \emph{asymptotically conformal relative to
$\gamma$} if
\[
\lim_{t\to 0}\,\sup_{z_1,z_2\in \Gamma}\,\left\{\max_{z\in\Gamma[z_1,z_2]}\frac{|z_1-z|+|z-z_2|}{|z_1-z_2|}:0<d_X(\gamma^{-1}(z_1),\gamma^{-1}(z_2))\leq t\right\}=1.
\]
Here $\Gamma[z_1,z_2]$ denotes the subarc of $\Gamma$ joining $z_1$ and $z_2$. 
When $\gamma$ is the identity parametrization of $\Gamma$, this reduces to the usual notion of asymptotic symmetry. 
Concerning this condition, see \cite{MT1}.

We first note the following geometric consequence of asymptotic conformality of quasiconformal mappings.

\begin{lemma}\label{lem:asymptotic-symmetry}
Under the hypotheses of Proposition~\ref{prop:AC-boundary-AS}, the curve $\Gamma=F(\R)$ is an asymptotically conformal quasiline relative to $f=F|_{\R}$, that is, for any $p_n<q_n$ and $r_n\in[p_n,q_n]$ in $\R$,
\[
 \frac{|f(p_n)-f(r_n)|+|f(r_n)-f(q_n)|}{|f(p_n)-f(q_n)|} \to 1 \quad {\rm as} \quad |p_n-q_n|\to 0
\]
uniformly independent of the choice of the sequences.
\end{lemma}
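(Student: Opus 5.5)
We argue by contradiction, using the same blow-up and normal-family scheme as in the proof of Proposition~\ref{prop:AC-boundary-AS}. Suppose the conclusion fails. Then there are $\varepsilon_0>0$, points $p_n<q_n$ with $|p_n-q_n|\to0$, and $r_n\in[p_n,q_n]$ such that
\[
 \frac{|f(p_n)-f(r_n)|+|f(r_n)-f(q_n)|}{|f(p_n)-f(q_n)|}\ge 1+\varepsilon_0 .
\]
Put $s_n=q_n-p_n>0$ and define the conformal affine maps $\alpha_n(z)=p_n+s_nz$ and $\beta_n(w)=(w-f(p_n))/(f(q_n)-f(p_n))$. Set $G_n=\beta_n\circ F\circ\alpha_n$. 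Then $G_n$ fixes $0,1,\infty$ and has the same maximal dilatation as $F$. Moreover $G_n(\R)=\beta_n(F(\R))$, since $\alpha_n$ maps $\R$ onto $\R$.

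Since $|\operatorname{Im}\alpha_n(z)|=s_n|\operatorname{Im}z|$ and $s_n\to0$, hypothesis \eqref{eq:uniformAC} gives $\esssup_{z\in E}|\mu_{G_n}(z)|\le k_F(s_n\sup_E|\operatorname{Im}z|)\to0$ for every compact $E\subset\C$. This bound also covers points of $E$ on $\R$, which form a null set. By Lemma~\ref{lem:normal-qc}, every subsequence of $G_n$ has a further subsequence converging uniformly in the spherical metric to a conformal map fixing $0,1,\infty$, that is, to $\id_\C$. Hence $G_n\to\id_\C$, and in particular $G_n\to\id$ uniformly on $[0,1]$.

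Now let $t_n=(r_n-p_n)/s_n\in[0,1]$ and pass to a subsequence with $t_n\to t\in[0,1]$. By uniform convergence on $[0,1]$ we get $G_n(t_n)\to t$. Since $\beta_n$ is a similarity, the ratio above equals
\[
 \frac{|G_n(0)-G_n(t_n)|+|G_n(t_n)-G_n(1)|}{|G_n(0)-G_n(1)|}=|G_n(t_n)|+|G_n(t_n)-1|,
\]
which tends to $t+(1-t)=1$. This contradicts the lower bound $1+\varepsilon_0$.

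The only point needing care is uniformity. The argument yields a contradiction for any sequences violating uniform convergence, so the convergence is automatically uniform over all choices of $p_n,q_n,r_n$. There is also a mild issue in that Lemma~\ref{lem:normal-qc} requires a vanishing dilatation on compact sets, which may meet $\R$. This causes no difficulty, because $\R$ has measure zero and a quasiconformal map with $\mu=0$ a.e.\ is conformal. The same argument shows that $\Gamma$ is asymptotically conformal in the parametrization of $f$. This is the relative notion, which does not require $F$ to be uniformly continuous.
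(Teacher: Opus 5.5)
Your proof is correct and follows the paper's argument: you normalize $p_n,q_n$ to $0,1$ exactly as in the proof of Proposition~\ref{prop:AC-boundary-AS}, and you obtain $G_n\to\id_{\C}$ from the vanishing dilatation on compact sets. You then pass to a subsequence $\sigma_n\to\sigma\in[0,1]$ so that the ratio tends to $|\sigma|+|1-\sigma|=1$, and the contradiction setup yields the uniformity, just as in the paper.
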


\begin{proof}
Normalize $p_n$ and $q_n$ to $0$ and $1$ exactly as in \eqref{eq:Gn} to define the normalized quasiconformal maps $G_n$ extending the quasisymmetric embeddings. 
By the same argument as in the proof of Proposition~\ref{prop:AC-boundary-AS}, $G_n\to \id_{\C}$ 
locally uniformly. 
Writing
\[
 \sigma_n=\frac{r_n-p_n}{q_n-p_n}\in[0,1]
\]
and passing to a subsequence with $\sigma_n\to\sigma$, the displayed ratio converges to $|\sigma|+|1-\sigma|=1$.
A contradiction argument gives uniformity.
\end{proof}
\begin{proof}[Another proof of Proposition \ref{prop:AC-boundary-AS}]
Assume that $\mathrm{AS}$ fails and choose sequences as in \eqref{eq:AS-failure-domain}--\eqref{eq:AS-failure-image}.  
Passing to a subsequence, suppose
\begin{equation}\label{eq:t0-t0prime}
 \frac{|a_n-x_n|}{|b_n-x_n|}\to  t_0,
 \qquad
 \frac{|f(a_n)-f(x_n)|}{|f(b_n)-f(x_n)|}\to  t_0',
\end{equation}
where $0\le t_0<t_0'<\infty$.
The finiteness of $t_0'$ follows from global quasisymmetry. 
There are three possible linear-order configurations, corresponding to the configurations AXB, XAB, and ABX in \cite{BraniaYang}.
We will consider only the configuration AXB. The reduction of the other cases to AXB is
the same as in the original argument by using Lemma \ref{lem:asymptotic-symmetry}.

Suppose $x_n$ lies between $a_n$ and $b_n$.  
After reversing orientation if necessary, assume $a_n<x_n<b_n$. 
Fix a small number $\rho>0$.  
For all sufficiently large $n$, put $d_n=x_n+2\rho$, and let
\[
 E_n=[a_n,x_n],\qquad F_n=[b_n,d_n],\qquad\Gamma_n=\Delta(E_n,F_n;\C).
\]
Since the four points are collinear and ordered,
\begin{equation*}
 \Mod\Gamma_n=\lambda(\tau_n),\qquad\tau_n=\frac{|x_n-b_n|\,|a_n-d_n|}{|a_n-x_n|\,|b_n-d_n|}.
\end{equation*}
Since $|a_n-x_n|,|b_n-x_n|\to0$ while $\rho>0$ is fixed, it follows that
\begin{equation}\label{eq:tau-limit}
 \tau_n\to \frac1{t_0}.
\end{equation}

Let $E_n'=f(E_n)$, $F_n'=f(F_n)$, and $\Gamma_n'=F(\Gamma_n)$.
The comparison principle \eqref{eq:Teich-comparison} gives
\begin{equation}\label{eq:image-lower}
 \Mod\Gamma_n'\ge\lambda(\tau_n'),
\end{equation}
where
\[
 \tau_n'=\frac{|f(x_n)-f(b_n)|\,|f(a_n)-f(d_n)|}{|f(a_n)-f(x_n)|\,|f(b_n)-f(d_n)|}.
\]
Global quasisymmetry gives
\[
 \frac{|f(a_n)-f(x_n)|}{|f(d_n)-f(x_n)|}\leq\eta\!\left(\frac{|a_n-x_n|}{|d_n-x_n|}\right)=\eta\!\left(\frac{|a_n-x_n|}{2\rho}\right)\to 0,
\]
and similarly
\[
 \frac{|f(b_n)-f(x_n)|}{|f(d_n)-f(x_n)|}\leq\eta\!\left(\frac{|b_n-x_n|}{|d_n-x_n|}\right)=\eta\!\left(\frac{|b_n-x_n|}{2\rho}\right)\to 0.
\]

By the triangle inequality,
\[
1-\frac{|f(a_n)-f(x_n)|}{|f(d_n)-f(x_n)|}\leq\frac{|f(a_n)-f(d_n)|}{|f(d_n)-f(x_n)|}\leq1+\frac{|f(a_n)-f(x_n)|}{|f(d_n)-f(x_n)|},
\]
and hence
\[
\frac{|f(a_n)-f(d_n)|}{|f(d_n)-f(x_n)|}\to 1.
\]
The same argument with $a_n$ replaced by $b_n$ gives
\[
\frac{|f(b_n)-f(d_n)|}{|f(d_n)-f(x_n)|}\to 1,
\]
and therefore
\[
\frac{|f(a_n)-f(d_n)|}{|f(b_n)-f(d_n)|}\to 1.
\]
Since
\[
\tau_n'=\frac{|f(b_n)-f(x_n)|}{|f(a_n)-f(x_n)|}\frac{|f(a_n)-f(d_n)|}{|f(b_n)-f(d_n)|},
\]
the first factor tends to $1/t_0'$ by \eqref{eq:t0-t0prime} while the second tends to $1$. 
Thus
\begin{equation}\label{eq:tauprime-limit}
\tau_n'\to \frac1{t_0'}.
\end{equation}

We next estimate $\Mod\Gamma_n'$ from above.  
Split $\Gamma_n=\Gamma_{n,1}\cup\Gamma_{n,2}$, where $\Gamma_{n,1}$ consists of curves contained in $B(x_n,\rho)$ and $\Gamma_{n,2}$ consists of the remaining curves.  
Set
\[
 K(\rho)=\frac{1+k_F(\rho)}{1-k_F(\rho)}.
\]
Since $B(x_n,\rho)$ lies in the strip $|\operatorname{Im}z|<\rho$, quasi-invariance of modulus gives
\begin{equation}\label{eq:local-modulus}
 \Mod F(\Gamma_{n,1})\leq K(\rho)\Mod\Gamma_{n,1}\leq K(\rho)\lambda(\tau_n).
\end{equation}

Let $r_n=2\max\{|a_n-x_n|,|b_n-x_n|\}$.
Every curve in $\Gamma_{n,2}$ contains a subcurve joining the circles $|z-x_n|=r_n$ and $|z-x_n|=\rho$.  
Hence the annulus estimate gives
\begin{equation}\label{eq:escaping-modulus}
 \Mod\Gamma_{n,2}\leq\frac{2\pi}{\log(\rho/r_n)}\to 0.
\end{equation}
If $K_0$ is the global maximal dilatation of $F$, then
\begin{equation}\label{eq:escaping-image}
 \Mod F(\Gamma_{n,2})\leq K_0\Mod\Gamma_{n,2}\to 0.
\end{equation}
By subadditivity of modulus, \eqref{eq:image-lower}, \eqref{eq:local-modulus}, and \eqref{eq:escaping-image},
\[
 \lambda(\tau_n')\leq K(\rho)\lambda(\tau_n)+o(1).
\]
Letting $n\to\infty$ and using \eqref{eq:tau-limit} and \eqref{eq:tauprime-limit}, we obtain $\lambda(1/t_0')\leq K(\rho)\lambda(1/t_0)$.

Finally let $\rho\to0$.  Since $K(\rho)\to1$, $\lambda(1/t_0')\le\lambda(1/t_0)$, which is impossible because $t_0<t_0'$ and $\lambda$ is strictly decreasing.
The case $t_0=0$ is interpreted using $\lambda(\infty)=0$ and is even more immediate.
\end{proof}

We have therefore obtained a modulus-theoretic proof of Proposition~\ref{prop:AC-boundary-AS}.  
The two estimates \eqref{eq:local-modulus} and \eqref{eq:escaping-modulus} are the real-line counterparts of the two central estimates in Brania and Yang: the first uses almost conformality near the boundary, while the second makes the family of curves escaping the controlled neighborhood have arbitrarily small modulus.

\end{document}